\newcommand{\vertiii}[1]{{\left\vert\kern-0.25ex\left\vert\kern-0.25ex\left\vert #1 
    \right\vert\kern-0.25ex\right\vert\kern-0.25ex\right\vert}}
\newcommand{\mc}{\mathcal}
\newcommand{\mb}{\mathbb}
\newcommand{\R}{\mb R}
\newcommand{\N}{\mb N}
\newcommand{\T}{\mb T}
\newcommand{\eea}{\end{align}}
\renewcommand{\epsilon}{\varepsilon}
\renewcommand{\bar}{\overline}
\renewcommand{\tilde}{\widetilde}
\newcommand{\bo}{\boldsymbol}
\renewcommand{\phi}{\varphi}
\DeclareMathOperator{\supp}{supp}
\renewcommand\upsilon{\theta}
\newtheorem{theorem}{Theorem}[section]
\newtheorem{corollary}{Corollary}[section]
\newtheorem{lemma}{Lemma}[section]
\newtheorem{proposition}{Proposition}[section]
\theoremstyle{definition}
\newtheorem{definition}{Definition}[section]
\theoremstyle{remark}
\newtheorem{remark}{Remark}[section]
\newtheorem{example}{Example}[section]
\newtheoremstyle{algorithm}
{4pt}
{4pt}
{}
{}
{}
{:}
{\newline}
{}
\newtheorem{algorithm}{Algorithm}
\newcommand{\balgorithm}{\begin{algorithm}\begin{framed}\ }
\newcommand{\ealgorithm}{\end{framed}\end{algorithm}}
\newcommand{\bd}{\begin{definition}}
\newcommand{\ed}{\end{definition}}
\newcommand{\bt}{\begin{theorem}}
\newcommand{\et}{\end{theorem}}
\newcommand{\bp}{\begin{proposition}}
\newcommand{\ep}{\end{proposition}}
\newcommand{\bc}{\begin{corollary}}
\newcommand{\ec}{\end{corollary}} 
\newcommand{\bl}{\begin{lemma}}
\newcommand{\el}{\end{lemma}}
\newcommand{\br}{\begin{remark}}
\newcommand{\er}{\end{remark}}
\DeclareMathOperator{\Lip}{Lip}
\theoremstyle{definition}
\theoremstyle{remark}
\theoremstyle{example}
\title[Synchronization of coupled maps]{Synchronization for  networks of globally coupled maps in the thermodynamic limit}
\author{Fanni M. S\'elley}
\address{Institute of Mathematics, University of Leiden, Niels Bohrweg 1, 2333 CA Leiden, The Netherlands}
\email{
	f.m.selley@math.leidenuniv.nl}
\author{Matteo Tanzi}
\address{Courant Institute of Mathematical Sciences, New York University, 251 Mercer St  801, New York, NY 10012}
\email{
	matteo.tanzi@nyu.edu}
\begin{document}
\maketitle

\begin{abstract}{
We study a network of finitely many interacting clusters where each cluster is a collection of globally coupled circle maps in the thermodynamic (or mean field) limit. The state of each cluster is described by a probability measure, and its evolution is given by a self-consistent transfer operator. A cluster is synchronized if its state is a Dirac measure. We provide  sufficient conditions for all clusters to synchronize and we describe setups where the conditions are met thanks to the uncoupled dynamics and/or the (diffusive) nature of the coupling. We also give sufficient conditions for partially synchronized states to arise -- i.e. states where only a subset of the clusters is synchronized -- due to the forcing of a group of cluster on the rest of the network. Lastly, we use this framework to  show emergence and stability of chimera states for these systems. }
\end{abstract}
\tableofcontents

\section{Introduction}

Coupled map systems are simple models of spatially ordered interacting units, also referred to as \emph{sites} to emphasize their location in space. The evolution of each unit is prescribed by the same dynamical system, the uncoupled or local dynamic, plus a perturbation given by the interaction with neighbouring sites -- the role of the spatial structure is to define the neighbours of each site.  Coupled map lattices, where the sites are placed at the nodes of a regular  lattice, were studied extensively, see for example \cite{bunimovich1988spacetime,gielis2000coupled,keller2005spectral,keller2006uniqueness} and the references therein. Our current work will focus on the case where the maps are globally coupled, meaning that the neighbours of one unit are all the other units and each site interacts in the same way with every other site so that the system has full permutation symmetry. This model is amenable to taking a particular infinite-sites limit called the \emph{themodynamic limit}, well known from classical mechanics in the continuous time setting \cite{liboff1969introduction}. In our situation, the limit results into a system whose state is given by a probability measure and whose time evolution is given by a \emph{self-consistent operator} \footnote{This is the discrete time analogue of continuous time mean-field models that give rise to the Vlasov equations \cite{braun1977vlasov,dobrushin1979vlasov,maslov1978self}.} (see below for a definition). 

In this paper we study networks where each node of the network (which we will refer to as a \emph{cluster}) is a system of globally coupled maps in the thermodynamic limit. Our goal is to investigate the mechanisms that can lead to synchronization of the states within clusters, meaning that the state of a cluster converges to a Dirac measure by the effect of the dynamics. Below we will make these ideas more precise.


\subsection{Thermodynamic limit for a system of $N$ coupled clusters}
Consider a collection of $n\in \N$ interacting units divided into $N\in \N$ clusters. Given numbers $0< m_1, \,m_2,\,...,\,m_N<1$ with $\sum_i m_i=1$, we assume that for $i<N$, cluster $i$ is made of $M_i(n):=\lfloor m_in\rfloor \in \N$ units, and cluster $N$ is made of the $M_N(n)$ remaining units. Units in the same cluster evolve according to the same deterministic law, which is perturbed by the same type of interactions from units in any other given cluster. This system is described by $n$ coordinates $\bo \xi=(\xi_{1,1},...,\xi_{1,M_1};\,\xi_{2,1},...,\xi_{2,M_2};...)\in \T^n$ whose evolution is given by 
\begin{equation}\label{Eq:}
\xi_{i,j}(t+1)=f_i\circ \Phi_i(\xi_{i,j}(t);\,\bo \xi(t))
\end{equation}
where $f_i:\T\rightarrow \T$ is the uncoupled evolution for units in cluster $i$ and  
\[
\Phi_i(\xi_{i,j};\,\bo\xi)=\xi_{i,j}+\frac1n\sum_{\ell=1}^N\sum_{k=1}^{M_\ell}\frac 1{m_\ell}h_{i \ell}(\xi_{i,j},\xi_{\ell,k})
\]
gives the pairwise coupling between units where the coupling function $h_{i,\ell}$ can vary between different clusters of nodes. Take the thermodynamic limit for $n\rightarrow\infty$ assuming that at a given time  $t\in \N$, for all $i$, there is a probability measure $\mu_i$ such that 
\[
\lim_{n\rightarrow\infty}\frac{1}{M_i}\sum_{j=1}^{M_i}\delta_{\xi_{i,j}(t)}= \mu_i
\]
where the limit is with respect to the weak topology on the space of probability measures of $\T$. Then, if all the $f_i$ and the $h_{i\ell}$ are continuous, the time evolution can be written as 
\[
\xi_{i j}(t+1)=f_i\circ \Phi_{\bo\mu,i}(\xi_{i j}(t))\quad\mbox{where}\quad \Phi_{\bo\mu,i}(x)=x+\sum_{\ell=1}^N\int h_{ i\ell}(x, y)d\mu_\ell(y)
\] 
and where we denoted $\bo\mu=(\mu_1,...,\mu_N)$. 

The transfer operator of a measurable map $T: \T \to \T$ is defined as $T_*: \mc M_1(\T) \to\mc M_1(\T)$ 
\[
T_*\mu=\mu\circ T^{-1}.
\]
By this notation, $f_{i*}$ and $\Phi_{\bo\mu,i*}$ are the transfer operators of the maps $f_i$ and $\Phi_{\bo\mu,i}$ and one has that
\[
\lim_{n\rightarrow\infty}\frac{1}{M_i}\sum_{j=1}^{M_i}\delta_{\xi_{i,j}(t+1)}= f_{i*}\Phi_{\bo\mu,i*}\mu_i
\]
The evolution of the probability measures describing the states of the clusters is therefore given by application of a transfer operator which depends on the measures themselves.\footnote{In the case of one cluster, this construction already appeared in \cite{selley2021linear}.} So in the thermodynamic limit, we have a particular sequential (also called non-autonomous) dynamical system on the circle $\T$. Defining \[(\mc F \bo \mu)_i=f_{i*}\Phi_{\bo\mu,i*}\mu_i, \quad i=1,\dots,N,\] we obtain a nonlinear operator describing the evolution of the system state, which we will call the self consistent transfer operator.

Existence of fixed points for self-consistent operators, their stability, and their behavior under perturbations have been { first investigated by \cite{blank2011self,keller2000ergodic}} and more recently by \cite{BS16,selley2021linear, G21}.  Most of the results cited above have been stated having in mind self-consistent operators arising from small nonlinear perturbations of transfer operators having a spectral gap on some space of absolutely continuous probability measures with densities in a Banach space of regular functions  (e.g. transfer operators for uniformly expanding maps). 

In contrast, the measures we are most interested in are singular with respect to Lebesgue, and the mechanisms that lead to synchronization need/allow for large nonlinear perturbations.  First steps in this direction are contained in \cite{BS16,BKST18} where convergence to a Dirac measure is shown when the strength of interaction is sufficiently strong.

We note that in the continuous time case, synchronization for self-consistent systems has been recently investigated in \cite{bick2020multi}.	

\subsection{Synchronization}
Two units $\xi_{i,j_1}$ and $\xi_{i,j_2}$ in a given cluster are  synchronized if they are in the same state and follow the same evolution. Roughly speaking,  one says that two units synchronize, if the distance between their state variables converges asymptotically to zero. Synchronization is an important concept in applications  being often associated to function or malfunction of  real-world systems (e.g. \cite{hammond2007pathological},  \cite{rohden2012self}). 

Synchronization for  systems of finitely many coupled maps has been extensively investigated. See for example \cite{arenas2008synchronization,pikovsky2003synchronization,koiller2010coupled}. The main objects of study are synchronization manifolds, which are the subsets in phase space where a subset of the units have their state variables equal to each other: in a system with $n$ coupled units having coordinates $\{\xi_1,...,\xi_n\}$, fixed $1\le i_1<i_2<...<i_k\le n$, one can define the associated synchronization manifold as
\[
M_{(i_1,...,i_k)}:=\{\xi_{i_1}=\xi_{i_2}=....=\xi_{i_k}\}.
\]
When the set $M_{(i_1,...,i_k)}$ is invariant under the coupled dynamics, one usually studies conditions for its stability of this manifold. 

In the thermodynamic limit of the cluster model we study \emph{synchronization within each cluster}. We say that a cluster is synchronized if the probability measure describing its state is a Dirac measure. If at the thermodynamic limit a cluster is in the state $\delta_x$, this can be understood as almost every unit  in that cluster (in a suitable probabilistic sense) is in the state $x$. We say that the system is in a \emph{completely synchronized state} if this holds for all clusters. By our definition, not all clusters need to be in the state of the same Dirac measure, $\delta_{x_i}$ is free to depend on cluster $i$, and our general goal is not to discuss synchronization between clusters. However, we will do so in special cases when also synchronization between clusters is expected. We speak of a \emph{partially synchronized state} when only a subset of the clusters are synchronized. Effectively, by synchronization we mean the dynamical stability of synchronized states. We say that a set of states $\mc S$ is dynamically stable, when given a small perturbation of one of these states (from a prescribed set of perturbations), the system converges to an element of $\mc S$ under the effect of the dynamics

\subsection{Organization of the paper} In this paper we give a definition of stability of synchronized states, and discuss criteria implying stability under the evolution of the self-consistent operator. In Sect. \ref{Sec:Setup} we present the setup, give a rigorous definition of synchronized states, and define their stability. In Sect. \ref{Sec:ComplSyncedStates} we give sufficient conditions for stability of completely synchronized states and we describe two mechanisms that can produce stable completely synchronized states: in one the synchronized state is a consequence of the uncoupled dynamics, and in the other it is due to the diffusive nature of the coupling. Most importantly, checking the sufficient conditions that we give only requires the analysis of a finite dimensional dynamical system constructed from the infinite-dimensional self-consistent operator. In Sect. \ref{Sec:PartSyncedStates} we study the stability of partially synchronized states, where the synchronization is due to the influence of the unsynchronized clusters on the clusters that synchronize. We discuss applications on chimera states  and illustrate our result with some numerical simulations.
\\

\subsection*{Acknowledgments} Matteo Tanzi was supported by the Marie Sk{\l}odowska-Curie actions project: ``Ergodic Theory of Complex Systems" proj. no. 843880.  The research of F. M. S\'elley was supported by the European Research Council (ERC) under the European Union’s Horizon 2020 research and innovation programme (grant agreement No 787304).

\section{Setup}\label{Sec:Setup}
\subsection{The self-consistent transfer operator}
We now describe our model considering a network made of $N\in\N$ clusters in the thermodynamic limit. Denote by $\mc M_1$ the set of  Borel probability measures on $\T$. A state of the network is described by an element $\bo \mu=(\mu_1,...,\mu_N)$ in $\mc M_1^N$, where $\mu_i$ is the state of the $i-$th cluster. 

The dynamics is prescribed by maps $\{f_i\}_{i=1}^N$, $f_i:\T\rightarrow \T$ and interaction functions $\{h_{ij}\}_{i,j=1}^N$, $h_{ij}:\T\times \T\rightarrow \R$. { We are going to assume throughout the paper that $\{f_i\}_{i=1}^N$ and $\{h_{ij}\}_{i,j=1}^N$ are smooth maps, in particular twice continuously differentiable in all variables.}

Given $\bo \mu\in\mc M_1^N$, for $i=1,...,N$ define the maps  $\{F_{\bo \mu,i}\}_{i=1}^N$ on $\T$
\begin{equation}\label{Eq:SelfConsistentEq}
F_{\bo \mu,i}(x):= f_i\circ \Phi_{\bo \mu,i}(x)  
\end{equation}
where
\begin{equation}
\Phi_{\bo \mu,i}(x) := x+\sum_{j=1}^N\int_\T h_{ij}(x,y)d\mu_j(y)
\end{equation}
is a mean-field coupling map. The evolution of a state $\bo \mu\in\mc M_1^N$ is given by  $\mc F:\mc M_1^N\rightarrow \mc M_1^N$ defined as
\[
\mc F\bo \mu=(F_{\bo\mu,1*}\mu_1,...,F_{\bo\mu,N*}\mu_N)
\]
where $F_{\bo \mu,i*}$ denotes the transfer operator of the map $F_{\bo \mu,i}$. Later on we are going to denote by $F_{\bo \mu}:\T^N\rightarrow\T^N$ the product map $F_{\bo\mu}(\bo x)=(F_{\bo\mu,1}(x_1),...,F_{\bo\mu,N}(x_N))$ where $\bo x=(x_1,...,x_N)$.

In some cases we are going to write  $\alpha_{ij}h_{ij}$ instead of $h_{ij}$, to highlight the dependence on a parameter $\alpha_{ij} \in \R$ that allows us to ``tune" the \emph{coupling strength} of the interactions from  nodes in cluster $j$ to nodes in cluster $i$.

\subsection{Synchronized states}
As we have already mentioned in the introduction, a cluster is synchronized if its state is a $\delta$-measure concentrated at some point. 


Our main focus will be on particular synchronized states where the support of the Dirac masses is contained in some specific subset of $\T^N$.
\begin{definition}
Let $X \subset \T^N$ and 
\[
\mc S_X:=\{\bo \mu\in\mc M_1^N:\,\,\,\exists \bo x\in X\mbox{ s.t. } \mu_{i}=\delta_{x_i}, \forall i=1,\dots,N\}
\] 
denote the set of \emph{completely synchronized states in $X$}. 
\end{definition}

\begin{definition}
Let $k< N$, $X \subset \T^k$, and 
\[
\mc{PS}^{(i_1,\dots,i_k)}_{X}:=\{\bo \mu\in\mc M_1^N:\,\,\,\exists \bo x\in X\mbox{ s.t. } \mu_{i_j}=\delta_{x_{j}} \forall j=1,\dots,k\}
\]
denote the set of partially synchronized states in $X$ where the clusters $i_1,\dots,i_k$ are synchronized.
\end{definition}
From now on we will assume without loss of generality that in a partially synchronized state, the last $k$ clusters are synchronized, while the first $N-k$ might not be, and we will denote the set of partially synchronized states by
\[
\mc{PS}_{X}=\mc M_1^{N-k}\times \mc S_X.
\]
\begin{remark}
From the definition of the self-consistent transfer operator it follows immediately that  $\mc F \mc S_{\T^N}\subset \mc S_{\T^N}$, and therefore also $\mc F \left(\mc{PS}_{ \T^k}\right)\subset \mc{PS}_{ \T^k}$, implying that synchronized states are invariant under the self-consistent operator. We will see that in many particular cases of interest, $\T^N$ (and $\T^k$) can be restricted to particular subsets $X$, and $\mc M_1^{N-k}$ can be narrowed down to a much smaller set of measures.
\end{remark}

One then wonders if a given synchronized state is stable under $\mc F$, i.e. if perturbing a synchronized state $\bo \mu$ slightly, in a sense that will be made precise below, $\mc F^n\bo\mu$ converges to a synchronized state.   
 
\subsection{Stability of synchronized states}
In the following we denote by $\mc M$ the set of finite signed Borel measures on $\T$, and $d_W$ the Wasserstein distance between measures in $\mc M_1$ (Wasserstein distance associated to the Euclidean metric on $\T$\footnote{ An explicit expression for this distance is given by
\[
d_W(\mu,\nu)=\sup_{\phi\in\Lip_1(\T)}\int \phi d(\mu-\nu)
\]
where $\Lip_1(\T)$ is the set of Lipschitz functions from $\T$ to $\R$ with Lipschitz constant at most 1.}).
 Given $N\in\N$, for  $\bo \mu=(\mu_1,...,\mu_N),\bo \nu=(\nu_1,...,\nu_N)\in\mc M_1^N$ we extend the definition of $d_W$ as follows
\[
d_W(\bo \mu, \bo \nu):=\max_id_W(\mu_i, \nu_i).
\]

For a measure $\mu\in \mc M_1$, we denote by $\supp \mu$ its topological support; for a state $\bo\mu=(\mu_1,...,\mu_N)\in \mc M_1^N$ we denote
\[
\supp \bo\nu:= \supp \mu_1\times....\times \supp\mu_N\subset \T^N,
\]
and we define
\[
|\supp\bo \nu|_{\infty}:=\max_{i}|\supp \mu_i|
\]
with $|\supp \mu_i|:=\inf\{|I|:\, I\mbox{ arc s.t. }\supp\mu_i\subset I\}$ where $|I|$ is the length of the arc $I$.

In the following section  we are going to discuss the stability of synchronized states under perturbation. We mean stability in the following sense: 

{\begin{definition}
	Let  $\bo\mu \in \mc{S}_{X}$ for some $X \subset \T^N$. We say that $\bo\mu$ is a \emph{stable synchronized state} if there is $\epsilon>0$, $R > 0$ such that for every $\bo \mu'\in\mc M_1^{N}$ with $|\supp \bo\mu'|_\infty<\epsilon$ and $\supp \bo \mu' \subset B_R(X)$
	\[
	\lim_{n\rightarrow\infty}d_{W}\left(\mc F^n\bo\mu',\, \mc{S}_{ X}\right)=0.
	\]
\end{definition}
In the partially synchronized case, we mean the following:
\begin{definition}
Let $(\bo\mu,\bo\nu)\in \mc {PS}_{X}$  for some $X \subset \T^{N-N_1}$. We say that $(\bo\mu,\bo\nu)$ is a \emph{stable partially synchronized state} if there are $\mc N \subset \mc M_1^{N_1}$, with $\bo \mu\in \mc N$, and $\epsilon>0$ such that 
\[
\lim_{n\rightarrow\infty}d_{W}\left(\mc F^n(\bo\mu',\bo\nu'),\,  \mc{PS}_X\right)=0.
\]
for every $\bo \mu' \in \mc N$ and $\bo \nu'\in\mc M_1^{N-N_1}$ with $|\supp \bo\nu'|_\infty<\epsilon$ and and $\supp \bo \nu' \subset B_R(X)$.
\end{definition}
\begin{remark}
Notice that $\mc N$ is a set of allowed perturbations for $\bo \mu$. The larger $\mc N$,  the more robust  the stability of the partially  synchronized state. \end{remark}

%
%

\section{Stability of completely synchronized states}\label{Sec:ComplSyncedStates}
We first give a criterion for the stability of completely synchronized states, and then we apply it to obtain a perturbative result on stability of synchronized state in the small coupling regime, and stability of synchronized states in the case of diffusive coupling.
\subsection{Criterion for stability of completely synchronized states}
Consider the map $G:\T^N\rightarrow \T^N$ that describes the action of $\mc F$ on the completely synchronized states $\bo \mu \in\mc S^{(1,...,N)}$ and is defined in the following way: for every $\bo x=(x_1,\dots,x_N)\in\mb T^N$
\begin{equation}\label{Eq:DefG}
 G(\bo x):=\bo x'\,\,\mbox{ where }\,\,\mc F(\delta_{x_1},\dots,\delta_{x_N})=(\delta_{x_1'},\dots,\delta_{x_N'}).
\end{equation}

For every $i=1,...,N$, define functions $g_{i}:\T^N\rightarrow \R$ as  
\begin{equation}\label{Eq:Defgi}
g_i(\bo x):=(F_{(\delta_{x_1},...,\delta_{x_N}),i})'(x_i)=\partial_x\left[f_i\left(x+\sum_{j}h_{ij}(x,x_{ j})\right) \right]|_{x=x_i}.
\end{equation}
These functions control the derivative of $F_{\bo\mu,i}$ for states $\bo\mu$ close to completely synchronized states.

The following result gives a criterion for stability of synchronization in terms of the finite dimensional dynamics of $G$ and the finite set of functions $\{g_i\}_{i=1}^N$. 

\begin{theorem}\label{Thm:CondStabSyncedstates} Assume that there are  $\lambda\in(0,1)$ and $n_0\in\N$ such that 
\begin{itemize}\item[1)] $G$ has an invariant set $X\subset \T^N$ which is uniformly attracting, i.e. $\exists U\supset X$ open such that  $G(U)\subset U$  such that
\[
d(G^{n_0}(\bo x),X)\le \lambda d(\bo x,X)\quad\quad\forall \bo x\in U;
\]
\item[2)] $|\prod_{j=0}^{n_0-1}g_i(G^{j}\bo x)|\le \lambda<1$ for every $\bo x\in X$.
 \end{itemize}

 Then there are $R>0$, $\epsilon>0$, and $\Lambda\in(0,1)$ such that if  $\bo \mu=(\mu_1,...,\mu_N)$ satisfies 
 \[
 |\supp\mu_i|_\infty<\epsilon\quad \mbox{ and }\quad \supp \bo \mu \subset B_R(X):=\left\{\bo y:\, \inf_{\bo x\in X}d(\bo y,\bo x)<R\right\},
 \]
 one has 
 \[
d_W(\mc F^n\bo \mu,\,\mc S_X)\le \mc O(\Lambda^n).
 \]
\end{theorem}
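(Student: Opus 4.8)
The strategy is to control, simultaneously, the location of $\supp \mc F^n \bo\mu$ (which should contract toward $X$) and the diameter $|\supp (\mc F^n\bo\mu)_i|$ of the support of each marginal (which should shrink to zero). The key observation is that the push-forward $(\mc F\bo\mu)_i = F_{\bo\mu,i*}\mu_i$ is supported on $F_{\bo\mu,i}(\supp\mu_i)$, and that on a small arc $F_{\bo\mu,i}$ is a near-isometry up to the factor $g_i$ evaluated near the relevant point of $X$: precisely, by a Taylor/mean-value estimate using the assumed $C^2$ regularity of the $f_i$ and $h_{ij}$, for $\bo\mu$ with $|\supp\bo\mu|_\infty$ small and $\supp\bo\mu$ close to $\bo x\in X$ one has $|F_{\bo\mu,i}(\supp\mu_i)| \le \big(|g_i(\bo x)| + C(R + \epsilon)\big)\,|\supp\mu_i|$, and likewise the ``center'' of $\supp(\mc F\bo\mu)_i$ is within $C(R+\epsilon)|\supp\mu_i|$ of $G(\bo x)_i$, where $G$ is the map from \eqref{Eq:DefG}. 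This reduces the infinite-dimensional dynamics to a perturbation of the finite-dimensional dynamics of $G$ together with the multiplicative cocycle $\{g_i\}$ along orbits of $G$.

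First I would fix the time scale $n_0$ and choose $\Lambda \in (\lambda, 1)$. Using hypothesis (1), shrink the neighborhood $U$ and pick $R>0$ so that $B_R(X)\subset U$ and $G^{n_0}$ contracts $B_R(X)$ toward $X$ by a factor $\Lambda$; using hypothesis (2) and continuity of $\bo x \mapsto \prod_{j=0}^{n_0-1} g_i(G^j\bo x)$, together with the Taylor estimate above, choose $\epsilon>0$ and possibly shrink $R$ so that for any $\bo\mu$ with $|\supp\bo\mu|_\infty < \epsilon$ and $\supp\bo\mu\subset B_R(X)$ one has, for each $i$, $|\supp(\mc F^{n_0}\bo\mu)_i| \le \Lambda |\supp\mu_i|_\infty$ and $\supp(\mc F^{n_0}\bo\mu)\subset B_{\Lambda R}(X) \subset B_R(X)$. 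The second inclusion requires bookkeeping: the center moves like $G^{n_0}$ (contracting toward $X$ with factor $\lambda$) plus an error of order $(R+\epsilon)$ times the support diameter, which is itself $\le \epsilon$, so for $\epsilon, R$ small the net displacement keeps us inside $B_{\Lambda R}(X)$; here one uses that the accumulated support-diameter errors along $n_0$ steps form a geometric-type sum bounded by a constant times $\epsilon$.

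With these two invariances established, I would iterate: the set $\{\bo\mu : |\supp\bo\mu|_\infty < \epsilon,\ \supp\bo\mu\subset B_R(X)\}$ is forward-invariant under $\mc F^{n_0}$, and on it both $|\supp(\mc F^{kn_0}\bo\mu)_i|$ and $d\big(\supp(\mc F^{kn_0}\bo\mu), X\big)$ decay geometrically with ratio $\Lambda$ (for the support diameter; the support-location contraction combines the $\lambda$ from $G$ with the shrinking-diameter error terms, yielding some ratio in $(\Lambda,1)$, which I can absorb by enlarging $\Lambda$). Interpolating over the intermediate times $1,\dots,n_0-1$ (on which everything changes by at most a bounded factor) gives geometric decay for all $n$. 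Finally, since $\supp(\mc F^n\bo\mu)$ lies within $\mc O(\Lambda^n)$ of a point $\bo x^{(n)}\in X$ and has marginal diameters $\mc O(\Lambda^n)$, each marginal $(\mc F^n\bo\mu)_i$ is within Wasserstein distance $\mc O(\Lambda^n)$ of $\delta_{x^{(n)}_i}\in \mc S_X$ (the Wasserstein distance to a Dirac mass is bounded by the diameter of the support plus the distance of the support to that point), which gives $d_W(\mc F^n\bo\mu, \mc S_X)\le \mc O(\Lambda^n)$ as claimed.

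\textbf{Main obstacle.} The delicate point is the coupled bootstrap in the second paragraph: the displacement of the support's center per step is controlled only by a constant times the current support diameter, so one cannot treat the ``location'' dynamics and the ``diameter'' dynamics separately — one must run them together and check that the error terms, which are products of the small parameters $R+\epsilon$ with the (also small and shrinking) diameters, never push the orbit out of $B_R(X)$ before the contraction from hypothesis (1) takes effect. Making the choice of $\epsilon$ and $R$ in the right order, and verifying that the geometric series of accumulated errors over the first $n_0$ steps stays below the margin $(1-\Lambda)R$, is the technical heart of the argument; everything else is a routine consequence of the $C^2$ bounds on $f_i$ and $h_{ij}$ and the definition of the Wasserstein distance.
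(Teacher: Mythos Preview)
Your proposal is correct and follows essentially the same approach as the paper: both compare the product map $F_{\bo\mu}$ with $G$ and the derivatives $F_{\bo\mu,i}'$ with $g_i$ via continuity estimates, establish forward invariance of $\{\bo\mu:\,|\supp\bo\mu|_\infty<\epsilon,\ \supp\bo\mu\subset B_R(X)\}$ together with contraction of the support diameters, and iterate. The only presentational difference is that the paper first treats the case $n_0=1$ and then reduces $n_0>1$ to it by passing to $\tilde{\mc F}:=\mc F^{n_0}$, whereas you work with general $n_0$ directly; your bookkeeping of the location contraction toward $X$ is also slightly more explicit than the paper's, which is helpful since the conclusion requires convergence to $\mc S_X$ and not merely to $\mc S_{\T^N}$.
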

\begin{proof}

We first treat the case $n_0=1$ and then show how to modify the proof for $n_0>1$. We are going to use the notation $\bo \mu^{(n)}=\mc F^n\bo \mu$ for some fixed initial $\bo \mu$.
\\

{\bf Step 1} Here we compare $G$ with the product map $F_{\bo\mu}:\T^N\rightarrow \T^N$
\[
F_{\bo \mu}(\bo x):=(F_{\bo\mu,1}(x_1),...,F_{\bo\mu,N}(x_N)),
\]
and $g_i$ with $(F_{\bo \mu,i})'$ when $\bo \mu$ is  close to a synchronized state.

For every $\delta>0$ there is $\epsilon'>0$ such that if $$\supp \bo \mu\subset B^{\infty}_{\epsilon'}(\bo x):=\{\bo x' \in \T^N: \max_{i}|x_i-x'_i| < \epsilon'\},$$ then $\forall \bo y\in B^{\infty}_{\epsilon'}(\bo x)$
\[
|G(\bo y)-F_{\bo \mu}(\bo y)|<\delta.	
\]
This can be inferred by the equations
\begin{align*}
|(G(\bo y))_i&-(F_{\bo \mu}(\bo y))_i|=|(G(\bo y))_i- F_{\bo\mu,i}(y_i)|\\
&=f_i\left(y_i+\sum_j\alpha_{ij} h_{ij}(y_i,y_j)\right)-f_i\left(y_i+\sum_j\alpha_{ij}\int h_{ij}(y_i,y)d\mu_j(y)\right),
\end{align*}
and by the regularity assumptions on $h$ and $f_i$, that imply that the above quantity can be made as small as wanted by picking $\max_i d_W(\mu_i,\delta_{y_i})$ as small as needed. In particular, $\max_i d_W(\mu_i,\delta_{y_i})$ can be made small by requiring that for all $i$, $\supp \mu_i\subset (x_i-\epsilon',x_i+\epsilon')$ and $y_i\in (x_i-\epsilon',x_i+\epsilon')$, with $\epsilon'>0$ small.

By a similar reasoning, we can deduce that for every $\delta'>0$ there is $\eta>0$ such that if $\supp \bo \mu\subset B^{\infty}_\eta(\bo y)$, then
\[
|g_i(\bo y)-F_{\bo \mu,i}'(z)|<\delta'\quad\quad \forall z\in\supp(\mu_i).
\]

{\bf Step 2}
By assumption 2) and continuity of $g_i$, for every $\Lambda'\in(\lambda,1)$ there is $R>0$ such that   \[|g_i(\bo y)|\le \Lambda'<1,\quad\quad\forall \bo y\in B_R(X)\subset U.\] 
 
This and the conclusion of Step 1 imply that for every $\Lambda\in (\Lambda',1)$ there is $\eta'>0$ sufficiently small such that if $\supp \bo \mu^{(k)}\subset B^\infty_{\eta'}(\bo y^{(k)})$ with $\bo y^{(k)}\in B_{R}(X)$, then
\begin{equation}\label{Eq:ContrmapsF}
|(F_{\bo \mu^{(k)},i})'(z_i^{(k)})|\le \Lambda,\quad\quad\forall \bo z^{(k)}\in \supp(\bo \mu_i^{(k)}).
\end{equation}

{\bf Step 3}  Fix $\Lambda\in(\Lambda',1)$ and let  $\eta'>0$ be as in Step 2. Pick $\delta>0$  sufficiently small so that 
\begin{equation}\label{Eq:Choiceofdelta}
\Lambda R+\delta<R
\end{equation}
 and such that for any $H:\T^N\rightarrow \T^N$ with $|G-H|<\delta$
\begin{equation}\label{Eq:RegEvHs}
H(B_R(X))\subset B_{R}(X).
\end{equation}
Then pick $\epsilon<\eta'$ that satisfies  the conclusion of Step 1 for  $\delta>0$ as above.

With this choice of $\epsilon$, if $\supp\bo \mu^{(n)}\subset B_R(X)$ and $|\supp \bo \mu^{(n)}|_\infty<\epsilon$, then by \eqref{Eq:ContrmapsF} we have that \[
|\supp \bo \mu^{(n+1)}|_\infty<\Lambda|\supp \bo \mu^{(n)}|_\infty<{ \Lambda}^{}\epsilon,
\] 
and since $|G-F_{\bo\mu}|<\delta$
 \[\supp\bo \mu^{(n+1)}=F_{\bo \mu^{(n)}}(\supp\bo \mu^{(n)})\subset B_{\lambda R+\delta}(X)\subset B_{R}(X). \]

{\bf Step 4} Choosing $\bo \mu$ with $\supp\bo\mu\subset B_R(X)\cap B^\infty_\epsilon(X)$ where $R$ and $\epsilon>0$ are as in the steps above, by induction one can show that 
\[
|\supp \bo \mu^{(n)}|_\infty<\Lambda^n|\supp \bo \mu|_\infty.
\]
which implies that $\bo \mu^{(n)}$ converges exponentially fast, w.r.t. $d_W$, to $\mc M_1^{sync}$ as $n\rightarrow +\infty$. 
\\

When $n_0>1$, consider $\tilde{\mc F}:=\mc F^{n_0}$, and notice that 
\[
\tilde {\mc F}(\delta_{x_1},...,\delta_{x_N})=(\delta_{G^{n_0}(x_1)},...,\delta_{G^{n_0}(x_N)}).
\]
So we define $\tilde G:= G^{n_0}$. Furthermore, defining
\begin{equation}\label{Def:tildeF}
\tilde F_{\bo\mu^{(0)},i}(x):=F_{\bo\mu^{(n_0-1)},i}\circ...\circ F_{\bo\mu^{(1)},i}\circ F_{\bo\mu^{(0)},i}(x)
\end{equation}
one has that for $\bo\mu=(\mu_1,...,\mu_N)$, $(\tilde{ \mc F}\bo\mu)_i=\tilde F_{\bo\mu,i*}\mu_i$. We can then consider for all $\bo x=(x_1,..,x_N)\in X$
\[
\tilde g_i(\bo x):= (\tilde F_{(\delta_{x_1},...,\delta_{x_N}),i})'(x_i)=\prod_{j=0}^{n_0-1}g_i(G^j\bo x).
\]
Now it's easy to see that: $\tilde G(X)=X$, $\tilde G(U)\subset U$,
\[
d(\tilde G(\bo x),X)\le \lambda d(\bo x,X)\quad\quad\forall \bo x\in U, 
\]
and  $|\tilde g_i(\bo x)|\le \lambda<1$ for every $\bo x\in X$. One can check that \emph{mutatis mutandis}, steps from 1 to 4  above hold for $\tilde {\mc F}$ in place of $\mc F$ \footnote{The only difference is the  form of $\tilde F_{\bo \mu,i}$, \eqref{Def:tildeF},  compared to that of $F_{\bo\mu,i}$ \eqref{Eq:SelfConsistentEq}.} proving that  there are $R>0$, $\epsilon>0$, and $\Lambda\in(0,1)$ such that if  $\bo \mu=(\mu_1,...,\mu_N)$ satisfies 
 \[
 |\supp\mu_i|_\infty<\epsilon\quad \mbox{ and }\quad \supp \bo\mu \subset B_R(X),
 \] 
 one has 
 \[
d_W(\tilde{\mc F}^n\bo \mu,\,\mc S_X)\le \mc O(\Lambda^n)
 \]
which in turn implies the conclusion of the theorem.
\end{proof}

\begin{remark}
In general, it is not clear whether the orbit of $\mc F^n\bo \mu$ in $\mc M_1^N$ will converge to an orbit in $X$. In the particular case where $X$ is an attracting periodic orbit of $G$, this can be  checked to be true. We are going to show an instance of this in the next section.
\end{remark}

In the following subsection we are going to discuss some applications of the theorem above.

\subsection{Weak coupling} In this section we will consider  interaction functions of the form $\alpha_{ij}h_{ij}$ for some $\alpha_{ij} \in \R$. We talk of weak coupling when $\alpha_{ij}$ are close to zero. Whenever the uncoupled maps $f_i$ have attracting periodic orbits, these give rise to synchronized states that are stable, and stay such when switching on a sufficiently weak coupling.

\begin{definition}
We say that $x \in Y$ is an attracting $k$-periodic point of the map $T \in \mc C^1(Y)$ if $T^k(x)=x$ and there exists $\lambda \in (0,1)$ such that $\|DT^{ k}(x)\| \leq \lambda $. 
\end{definition}

Notice that this implies that $\|DT^{ k}(T^{\ell}(x))\| \leq \lambda $ for all $\ell \in \{0,\dots,k-1\}$ as $DT^{ k}(x)=\prod_{i=0}^{k-1}D(T^{i}(x))=DT^{ k}(T^{\ell}(x))$ for all $\ell \in \{0,\dots,k-1\}$.

In the next theorem we are going to denote $G=G_{\alpha}$ and $g_i=g_{i,\alpha}$, $i=1,\dots,N$ to  highlight the dependence on the coupling strength $\alpha=(\alpha_{ij})_{i,j}$.
\begin{proposition} \label{Thm:PerOrbitSynch}
Let $\bo x \in \T^N$ be an attracting $k$-periodic point for $\bo f=f_1 \times \dots \times f_N:\T^N\rightarrow \T^N$. There is $\epsilon>0$ such that if $|\alpha_{ij}|<\epsilon$ holds for all $i,j \in \{1,\dots,N\}$ we have the following:
\begin{enumerate}
	\item there exists $\bo x_{\alpha} \in \T^N$ such that $G_{\alpha}^k(\bo x_{\alpha})=\bo x_{\alpha}$,
	\item denoting $X_{\alpha}=\{G_{\alpha}^{\ell}(\bo x_{\alpha}):\,\ell=0,...,k-1\}$, there are $R>0$, $\delta>0$ and $\Lambda'\in(0,1)$ such that if  $\bo \mu=(\mu_1,...,\mu_N)$ satisfies 
	\[
	|\supp\mu_i|_\infty<\delta\quad \mbox{ and }\quad \supp \bo\mu\subset B_R\left(X_{\alpha}\right),
	\] 
	one has 
	\[
	d_W(\mc F^n\bo \mu,\, \mc S_{X_{\alpha}})\le \mc O((\Lambda')^n)
	\]
\end{enumerate}
\end{proposition}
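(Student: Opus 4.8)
The strategy is to verify that for $|\alpha_{ij}|$ small enough, the hypotheses 1) and 2) of Theorem \ref{Thm:CondStabSyncedstates} hold with $n_0=k$ and $X = X_\alpha$, and then simply invoke that theorem. First I would observe that $G_0 = \bo f$ (the $\alpha=0$ self-consistent operator restricted to synchronized states is just the product of the uncoupled maps), and that the map $\alpha \mapsto G_\alpha$ is $\mc C^1$ (indeed as smooth as $f_i$ and $h_{ij}$ allow), uniformly on $\T^N$, because $G_\alpha(\bo x) = \bo f\bigl(\bo x + (\sum_j \alpha_{ij} h_{ij}(x_i,x_j))_i\bigr)$ depends smoothly on the finitely many parameters $\alpha_{ij}$. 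The hypothesis that $\bo x$ is an attracting $k$-periodic point for $\bo f$ means $\bo f^k(\bo x) = \bo x$ and $\|D\bo f^k(\bo x)\| \le \lambda < 1$; the fixed point $\bo x$ of $\bo f^k$ is hyperbolic (attracting), so by the implicit function theorem applied to $(\alpha, \bo y) \mapsto G_\alpha^k(\bo y) - \bo y$ there is, for $\alpha$ small, a unique nearby fixed point $\bo x_\alpha = \bo x + \mc O(|\alpha|)$ of $G_\alpha^k$, depending continuously (in fact $\mc C^1$) on $\alpha$. This gives item (1) and defines $X_\alpha = \{G_\alpha^\ell(\bo x_\alpha) : \ell = 0,\dots,k-1\}$, a $k$-periodic orbit of $G_\alpha$.

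Next I would check hypothesis 1) of the Theorem with $n_0 = k$. Since $\|D\bo f^k(\bo x)\| \le \lambda$ and $\bo f^k(\bo x)=\bo x$, there is an open neighbourhood $U_0 \ni \bo x$ and $\lambda_1 \in (\lambda,1)$ with $\|D\bo f^k(\bo y)\| \le \lambda_1$ on $U_0$, hence $\bo f^k$ contracts toward $\bo x$ on $U_0$ and $\bo f^k(U_0) \subset U_0$ after possibly shrinking $U_0$ to a small ball. By $\mc C^1$-closeness of $G_\alpha^k$ to $\bo f^k$ and of $\bo x_\alpha$ to $\bo x$, for $|\alpha|$ small the map $G_\alpha^k$ still contracts a slightly smaller neighbourhood $U$ of $\bo x_\alpha$ toward $\bo x_\alpha$ with rate $\lambda_2 \in (\lambda_1,1)$ and $G_\alpha^k(U) \subset U$; since $X_\alpha$ is the (finite) forward orbit of $\bo x_\alpha$, enlarging $U$ to $\bigcup_{\ell} G_\alpha^\ell(U)$ (which is forward-invariant under $G_\alpha$) gives the uniformly attracting invariant set $X_\alpha$ with $d(G_\alpha^k(\bo y), X_\alpha) \le \lambda_2 \, d(\bo y, X_\alpha)$ on $U$. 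This is routine hyperbolic-fixed-point perturbation theory, so I would not belabour the $\epsilon$-$\delta$ details.

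For hypothesis 2) I would note that at $\alpha = 0$ one has $g_{i,0}(\bo x) = f_i'(x_i)$, so $\prod_{j=0}^{k-1} g_{i,0}(\bo f^j \bo x) = (f_i^k)'(x_i)$, which is a diagonal entry's worth of $D\bo f^k(\bo x)$; more precisely, because $\bo f$ and $G_0$ are product maps, $D\bo f^k(\bo x)$ is diagonal with entries $(f_i^k)'(x_i)$, and the attracting hypothesis $\|D\bo f^k(\bo x)\| \le \lambda$ gives $|(f_i^k)'(x_i)| \le \lambda < 1$ for every $i$. Since $\bo x \mapsto g_{i,\alpha}(\bo x)$ and the orbit points $G_\alpha^j(\bo x_\alpha)$ all depend continuously on $\alpha$, the finite product $\prod_{j=0}^{k-1} g_{i,\alpha}(G_\alpha^j \bo x_\alpha)$ is close to $(f_i^k)'(x_i)$, hence bounded by some $\lambda' \in (\lambda, 1)$ uniformly over $i$ and over $\bo x \in X_\alpha$ (the latter being just the $k$ orbit points) for $|\alpha|$ small. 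Thus both hypotheses of Theorem \ref{Thm:CondStabSyncedstates} hold with the same $n_0 = k$ and with $\lambda$ replaced by $\max\{\lambda_2, \lambda'\} < 1$, and applying the theorem yields $R, \delta, \Lambda' \in (0,1)$ with $d_W(\mc F^n \bo \mu, \mc S_{X_\alpha}) \le \mc O((\Lambda')^n)$, which is exactly item (2).

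The only genuinely delicate point is making the two perturbative steps uniform in the right way: the implicit function theorem gives $\bo x_\alpha$ on a neighbourhood of $\alpha = 0$, but one must ensure the \emph{same} threshold $\epsilon$ on $|\alpha_{ij}|$ simultaneously guarantees (a) existence and uniqueness of $\bo x_\alpha$, (b) the uniform contraction of $G_\alpha^k$ on a fixed-size neighbourhood, and (c) the bound $|\prod_j g_{i,\alpha}| < 1$ — this is handled by taking the minimum of finitely many thresholds, one per $i$ and one for each of the finitely many orbit points, using compactness of $\T^N$ and continuity of all the data in $(\alpha, \bo x)$ jointly. I expect no serious obstacle beyond this bookkeeping; the substance is entirely carried by Theorem \ref{Thm:CondStabSyncedstates} together with standard persistence of hyperbolic periodic orbits.
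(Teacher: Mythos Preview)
Your proposal is correct and follows essentially the same approach as the paper: both proofs verify the hypotheses of Theorem~\ref{Thm:CondStabSyncedstates} with $n_0=k$ and $X=X_\alpha$ by (i) using the implicit function theorem to obtain the persistent periodic point $\bo x_\alpha$, (ii) using $\mc C^1$-continuity of $\alpha\mapsto G_\alpha^k$ to carry the contraction $\|DG_0^k\|\le\lambda$ over to a neighbourhood of $X_\alpha$, and (iii) using continuity of $\alpha\mapsto \prod_{j=0}^{k-1} g_{i,\alpha}(G_\alpha^j\,\cdot\,)$ together with the observation that at $\alpha=0$ this product equals $(f_i^k)'(x_i)$, a diagonal entry of $D\bo f^k(\bo x)$. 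Your write-up is in fact slightly more explicit than the paper's about the diagonal structure of $D\bo f^k$ and about building a $G_\alpha$-forward-invariant neighbourhood via $\bigcup_\ell G_\alpha^\ell(U)$, but the substance is identical.
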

\begin{proof}
We prove the statement by checking the assumptions of Theorem \ref{Thm:CondStabSyncedstates}. 

Since $\bo x$ is an attracting periodic point of the smooth map $\bo f=G_0$, there exists a $\rho > 0$ and $\lambda_0 < 1$ such that $\|DG_0^{k}(\bo y)\| \leq \lambda_0 $ for all $\bo y \in B_{\rho}(X_0)$ ( recall that $X_{0}=\{\bo f^{\ell}(\bo x):\,\ell=0,...,k-1\}$).

The map $G_{\alpha}$ writes
\[
(G_{\alpha}(\bo z))_i=f_i\left(z_i+\sum_{j=1}^N \alpha_{ij}h_{ij}(z_i,z_j) \right), \quad i=1,\dots,N
\]
implying that $\alpha \mapsto G_{\alpha} \in \mc C^2$. By the implicit function theorem we get that for all sufficiently small $\varepsilon_1 > 0$, choosing $|\alpha_{ij}| < \varepsilon_1$, $\forall i,j$ there exists $\bo x_{\alpha}$ such that $G_{\alpha}^k(\bo x_{\alpha})=\bo x_{\alpha}$. Furthermore, the distance of $\bo x$ and $\bo x_{\alpha}$ can be made arbitrarily small by decreasing $\alpha_{ij}$. Let us choose $\varepsilon_1$ such that $\|\bo x - \bo x_{\alpha} \|_{\infty} < \rho/2$ for all $|\alpha_{ij}| < \varepsilon_1$, $\forall i,j$. 
 
We are going to show the following:
\begin{enumerate}
	\item $DG_{\alpha}^k(\mathbf{\bo y})$ is uniformly contracting for all $\bo y$ sufficiently close to $X_{\alpha}$. 
	\item $g^k_{i,\alpha}(\bo z)$ is uniformly contracting for all $\bo z \in X_{\alpha}$. 
\end{enumerate}

By the regularity assumptions on $f$ the mapping $\alpha \mapsto DG^k_{\alpha}$ is continuous. This implies that there exists $\varepsilon_2 > 0$ such that for $|\alpha_{ij}| < \varepsilon_2$, $\forall i,j$ there exists a $\lambda_1 < 1$ such that $\|DG_{\alpha}^{k}(\bo y)\| \leq \lambda_1 $ for all $\bo y \in B_{\rho}(X_0)$. This implies that $\|DG_{\alpha}^{k}(\bo y)\| \leq \lambda_1 $ for all $\bo y \in B_{\rho/2}(X_{\alpha})$ and (1) is proved.

As for (2), $\alpha\mapsto g^k_{i,\alpha}$ is continuous, so there exists a $\varepsilon_3 > 0$ such that for $|\alpha_{ij}| < \varepsilon_3$, $\forall i,j$ we obtain $|g_{i,\alpha}^k(\bo z)| \leq \lambda_1$. 

Thus Theorem \ref{Thm:CondStabSyncedstates} proves our statement with the choice of $\lambda=\lambda_1$, $X=X_{\alpha}$ and $U=B_{\rho/2}(X_{\alpha})$.
\end{proof}

\subsection{Diffusive coupling}
In contrast with the previous section where the system exhibited synchronization due to the uncoupled dynamics, in this section we use Theorem \ref{Thm:CondStabSyncedstates} to prove persistence of stable synchronized states  due to strong coupling. In particular, we consider a kind of coupling well known in applications  called \emph{diffusive coupling}, that takes its name from the fact that the resulting dynamics can be viewed as a discrete-time analogue of a generalized reaction-diffusion process. For an overview on diffusive coupling see \cite{chazottes2005dynamics} and the references therein. We talk of diffusive coupling when the interaction function has the form
\[
h(x,y)=\phi(y-x)
\]
where $\varphi:\T\rightarrow \T$ is usually taken to be a function with 
\begin{equation}\label{Eq:Diffusivecoupling}
\phi(0)=0\quad\mbox{and}\quad \phi'(0)>0.
\end{equation} 
This will be a standing assumption throughout this section.

The following results give synchronization criterion in the presence of multiple clusters having equal uncoupled dynamics ($f_i=:f$ for all $i$) with diffusive coupling. In this situation equation \eqref{Eq:SelfConsistentEq} reads
\begin{equation}\label{Eq:LocalMapDiffusive}
F_{\bo \mu,i}(x)=f\left(x+\sum_j\int \phi_{ij}(y-x)d\mu_j(y)\right)
\end{equation}
and the $i-$th coordinate of $G$ -- defined in \eqref{Eq:DefG} -- has expression
\[
(G(\bo x))_i=f\left(x+\sum_j \phi_{ij}(x_j-x_i)\right)
\]
where the functions $\phi_{ij}:\T\rightarrow \T$ satisfy \eqref{Eq:Diffusivecoupling}. 

As can be easily verified, the diagonal set 
\begin{equation}\label{Eq:Diagonal}
\Delta:=\{(x,...,x):\,x\in\T\}\subset \T^{N}
\end{equation}
is invariant under the map $G$. Below,  we can establish conditions for the the existence of  stable synchronized states for the self-consistent operator when $\Delta$ is an attracting set for $G$. Conditions under which $\Delta$ is attracting for $G$ have been extensively studied in the literature of finite dimensional coupled maps\footnote{Notice that $G$ can be seen as the map describing the couple dynamics on a network of  $N$ couple units} (see e.g. \cite{QQQ}).
 
\begin{proposition}\label{Prop:DiffusiveCoupling}
Assume that the set $\Delta:=\{(x,...,x):\,x\in\T\}\subset \T^N$ is an attracting invariant set for $G$ -- as per assumption 1) of Theorem \ref{Thm:CondStabSyncedstates} -- and that
\begin{equation}\label{Eq:DiffusiveCondition}
\max_x |f'(x)| \left|1-\sum_j \phi_{ij}'(0)\right| <1\quad\quad \forall i.
\end{equation}
Then the set of synchronized states $\mc S_{\Delta}$ is stable.
\end{proposition}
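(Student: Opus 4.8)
The plan is to deduce the statement directly from Theorem~\ref{Thm:CondStabSyncedstates} applied with $X=\Delta$, so the only thing to do is to check its two hypotheses. Hypothesis~1) is assumed outright in the statement of the proposition, and it furnishes an open set $U\supset\Delta$ with $G(U)\subset U$, an integer $n_0\in\N$, and a constant $\lambda_1\in(0,1)$ such that $d(G^{n_0}(\bo x),\Delta)\le\lambda_1 d(\bo x,\Delta)$ for all $\bo x\in U$. The real work is to verify hypothesis~2), i.e. the uniform contraction of the products $\prod_{j=0}^{n_0-1}g_i(G^j\bo x)$ along $\Delta$, and this is where the diffusive-coupling condition \eqref{Eq:DiffusiveCondition} enters.

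First I would compute $g_i$ restricted to $\Delta$. Specializing \eqref{Eq:Defgi} to $f_i=f$ and $h_{ij}(x,y)=\phi_{ij}(y-x)$ and differentiating gives
\[
g_i(\bo x)=f'\!\Big(x_i+\sum_j\phi_{ij}(x_j-x_i)\Big)\Big(1-\sum_j\phi_{ij}'(x_j-x_i)\Big),
\]
and evaluating at a diagonal point $\bo x=(x,\dots,x)$, using $\phi_{ij}(0)=0$ from \eqref{Eq:Diffusivecoupling}, one obtains $g_i(x,\dots,x)=f'(x)\big(1-\sum_j\phi_{ij}'(0)\big)$. Hence, by \eqref{Eq:DiffusiveCondition},
\[
\sup_{\bo x\in\Delta}|g_i(\bo x)|\;\le\;\kappa:=\max_{i}\;\max_{x\in\T}|f'(x)|\,\Big|1-\sum_j\phi_{ij}'(0)\Big|\;<\;1 .
\]

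Since $\Delta$ is $G$-invariant, for every $\bo x\in\Delta$ and every $j$ we have $G^j\bo x\in\Delta$, so $|g_i(G^j\bo x)|\le\kappa$ and therefore $\big|\prod_{j=0}^{n_0-1}g_i(G^j\bo x)\big|\le\kappa^{n_0}<1$ for all $i$ and all $\bo x\in\Delta$. Setting $\lambda:=\max\{\lambda_1,\kappa^{n_0}\}\in(0,1)$, both hypotheses of Theorem~\ref{Thm:CondStabSyncedstates} hold with this $\lambda$ and this $n_0$ (enlarging $\lambda$ only weakens hypothesis~1)). The theorem then provides $R>0$, $\epsilon>0$ and $\Lambda\in(0,1)$ such that $d_W(\mc F^n\bo\mu,\mc S_\Delta)=\mc O(\Lambda^n)$ for every $\bo\mu$ with $|\supp\mu_i|_\infty<\epsilon$ and $\supp\bo\mu\subset B_R(\Delta)$; in particular this tends to $0$, which is exactly the definition of $\mc S_\Delta$ being a stable set of synchronized states.

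There is no deep obstacle here: the proposition is essentially a clean corollary of the general criterion, and the argument is bookkeeping. The two points that require a little care are the chain-rule computation of $g_i$ on the diagonal (and the cancellation $\phi_{ij}(0)=0$ that makes the first factor just $f'(x)$), and reconciling the single constant $\lambda$ that Theorem~\ref{Thm:CondStabSyncedstates} demands simultaneously in its two hypotheses, which is handled simply by taking a maximum.
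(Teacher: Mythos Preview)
Your proof is correct and follows essentially the same route as the paper: both reduce to checking hypothesis~2) of Theorem~\ref{Thm:CondStabSyncedstates} by computing $g_i$ on the diagonal via the chain rule, using $\phi_{ij}(0)=0$ to simplify, and invoking \eqref{Eq:DiffusiveCondition}. You are in fact slightly more explicit than the paper in handling the product over $j=0,\dots,n_0-1$ and in reconciling the single constant $\lambda$ across the two hypotheses by taking $\lambda=\max\{\lambda_1,\kappa^{n_0}\}$; the paper simply notes $|g_i(\bo x)|\le\lambda$ on $\Delta$ and leaves the rest implicit.
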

\begin{proof}
We should check  condition 2 of Theorem \ref{Thm:CondStabSyncedstates} when $X=\Delta$. From the expression of $g_i$ in \eqref{Eq:Defgi},  plugging in the expression for $F_{\bo \mu,i}$ in \eqref{Eq:LocalMapDiffusive}, we get for $\bo x=(x_1,...,x_N)$
\[
g_i(\bo x):=\left(1-\sum_j\phi'_{ij}(x_j-x_i)\right)\cdot  f'\left(x_i+\sum_j\phi_{ij}(x_j-x_i)\right) 
\] 
and for $\bo x\in \Delta$
\[
g_i(\bo x):= f'\left(x_i\right) \left(1-\sum_j\phi'_{ij}(0)\right).
\]
The condition in \eqref{Eq:DiffusiveCondition} implies that there is $\lambda\in(0,1)$ such that $|g_i(\bo x)|\le \lambda$ for every $\bo x\in \Delta$ and every $i$.
\end{proof}

\begin{remark}
Notice that in the case with one cluster, the dynamics of the self-consistent operator is completely determined by
\[
F_\mu=f\left(x+\int \phi(y-x)d\mu(y)\right).
\]
In this case condition \eqref{Eq:DiffusiveCondition} reads
$
(\max_x|f'(x)|)(1-\phi'(0))<1.
$
\end{remark}

\begin{example}
Assume that $f_i=f \in C^1$ for all $i=1,\dots,N$ and the restriction of $\phi_{ij}$ to an arc $(-\eta,\eta)\subset \T$ satisfies $\phi_{ij}|_{(-\eta,\eta)}(u)=\frac{\alpha}{N} u$, where $\alpha\in \R$ is a uniform coupling strength. We are going to show that one can find some conditions on $\alpha$ such that the assumptions of Proposition \ref{Prop:DiffusiveCoupling} are satisfied.

Consider the set
\[
\Delta_{\omega}=\{\bo x \in \T^N: |x_i-x_j| < \omega, \text{ for all } i,j=1,\dots,N\}.
\]
We now show that for $\alpha$ in a certain range, $G(\Delta_{\eta}) \subset \Delta_{\lambda\eta}$ for some $\lambda < 1$. Indeed, assume that $\bo x\in\Delta_\eta$. Then   $\phi_{m\ell}(x_m-x_{\ell})=\frac{\alpha}{N}(x_m-x_{\ell})$ and $\phi_{m\ell}'(x_m-x_{\ell})=\frac{\alpha}{N}$ for all $m,\ell$. So, calling $x'_j:=(G(\bo x))_j$
\begin{align*}
|x'_m-x_\ell'|&=\left|f\left(x_{\ell}+\sum_{j=1}^N \phi_{\ell j}(x_j-x_\ell) \right)-f\left(x_{m}+\sum_{j=1}^N \phi_{mj}(x_j-x_{m}) \right) \right| \\
&\leq \max|f'|\left|x_\ell-x_{m}+\frac{\alpha}{N}\sum_{j=1}^N(x_j-x_\ell-x_j+x_{m})  \right| \\
&=\max|f'||1-\alpha||x_\ell-x_{m}|
\end{align*}
which is less than $\eta$ if $\lambda:=\max|f'||1-\alpha|<1$. This can always be achieved by choosing $\alpha$ sufficiently close to 1:  notice that in this case is the  \emph{strong coupling} that induces synchronization, not the uncoupled dynamics $f$.

It is also easy to see that \eqref{Eq:DiffusiveCondition}  holds: since $\phi_{ij}'(0)=\frac{\alpha}{N}$
\begin{align*}
(\max_x |f'(x)|) \left|1-\sum_j \phi_{ij}'(0)\right|&=(\max_x |f'(x)|) \left|1-\sum_j\frac\alpha N\right| \\
&=(\max_x |f'(x)|) \left| 1-\alpha\right|\\
&=\lambda.
\end{align*}
\end{example}

\section{Partially Synchronized States}\label{Sec:PartSyncedStates}
We present and motivate the content of this section starting from a simple example illustrating a mechanism by which an unsynchronized cluster can drive another cluster to synchrony, giving rise to a stable partially synchronized state.

\begin{example}\label{Ex:Unsynceddrivsync}
Consider a network with two clusters, 1 and 2, whose self-consistent dynamics at state $(\mu,\nu)\in\mc M_1^2$ is prescribed by the set of equations
\begin{equation}\begin{array}{ll}
F_{(\mu,\nu),1}(x)&= f(x)\\
F_{(\mu,\nu),2}(x)&= f\left(x+ \alpha \phi(x) \int \psi(y) d\mu(y)\right)
\end{array}
\end{equation}
with $f:\T\rightarrow\T$ a $C^2$ uniformly expanding map, i.e. $|f'|\ge\sigma>1$, and where we picked $h_{11}=h_{12}=h_{22}=0$ and \[h_{21}(x,y)=\alpha\phi(x)\psi(y)\] for some $\phi,\psi:\T\rightarrow \R$. 
Notice that the dynamics of cluster 1 does not depend on the state of the system $(\mu,\nu)$, and in particular does not depend on $\nu$, while the evolution of cluster 2 depends on the state of cluster 1. Recall that $f$   has a  unique absolutely continuous invariant probability measure $\bar \mu$ \cite{krzyzewski2004invariant}, and every sufficiently regular density evolves to $\bar \mu$ under iterations of the transfer operator of $f$ (see e.g. \cite{boyarsky2012laws}).

Notice also that  if $\mu$ is such that $\int \psi(y)d\mu(y)=0$, we have that 
\[
F_{(\mu,\nu),2}(x)=f(x),
\] so cluster 2 feels the influence of cluster 1 only in the case where the above integral is nonzero. 

Now assume that $f(0)=0$ and $\phi(0)=0$. Then, $F_{(\mu,\nu),2}(0)=0$ and, in particular, $(\bar \mu, \delta_0)$ is a fixed state for the self-consistent transfer operator. Below we study the stability of this fixed state. 

First of all we find conditions for which $0$ is an attracting fixed point of $F_{(\mu,\nu),2}$ when $\mu=\bar \mu$. We are going to denote  $\bar \mu(\psi):=\int \psi d\bar\mu$ to shorten the notation.  Then
\[
(F_{(\mu,\nu),2})'(x) = f'\left(x+\alpha \phi(x)\bar \mu(\psi)\right)\,(1+\alpha\bar\mu(\psi) \phi'(x))
\]
and evaluating it at zero
\[
(F_{(\mu,\nu),2})'(0) = f'\left(0\right)(1+\alpha\bar\mu(\psi) \phi'(0)).
\]
 Imposing that 0 is attracting we obtain
\[
-1-f'(0)<\alpha\bar\mu(\psi) \phi'(0)f'\left(0\right)<1-f'(0)<0;
\]
if $\bar\mu(\psi) \phi'(0)f'\left(0\right)>0$ then
\begin{equation}\label{Eq:Intervalalfa1}
\alpha\in \left(\frac{-1-f'(0)}{\bar\mu(\psi) \phi'(0)f'\left(0\right)}, \frac{1-f'(0)}{\bar\mu(\psi) \phi'(0)f'\left(0\right)}\right)
\end{equation}
if $\bar\mu(\psi) \phi'(0)f'\left(0\right)<0$ then
\begin{equation}\label{Eq:Intervalalfa2}
\alpha\in \left(\frac{1-f'(0)}{\bar\mu(\psi) \phi'(0)f'\left(0\right)}, \frac{-1-f'(0)}{\bar\mu(\psi) \phi'(0)f'\left(0\right)}\right).
\end{equation}
 \end{example}
It is rather straightforward to show that if $\alpha$ satisfies one of the conditions above, then any state obtained from a sufficiently small perturbation\footnote{Perturbations of $\bar\mu$ should be small and in the Banach space of measure where the transfer operator of $f$ has a spectral gap, perturbations of $\delta_0$ could be any probability measure supported on a sufficiently small neighborhood of $0$. } of $(\bar\mu,\delta_0)$ converges to $(\bar\mu,\delta_0)$ under evolution of the self-consistent transfer operator, and therefore $(\bar\mu,\delta_0)$ is a stable attracting state (this will be made more precise in Section \ref{Sec:ExContinued}). We can interpret this as cluster 2 evolving to a synchronized state (in a stable way) as an effect of the interaction it receives from cluster 1 when the state of  cluster 1 is in the vicinity of $\bar \mu$. 
 
In the remark below we list some conclusions about this example that will be proved in the following sections.
\begin{remark}\label{Rem:ExampleINteract}
 In the example above:
 \begin{itemize}
 \item[i)] Assuming that $\bar\mu(\psi) \phi'(0)\neq 0$, there is an interval for $\alpha$ -- given explicitly in \eqref{Eq:Intervalalfa1} and \eqref{Eq:Intervalalfa2} -- for which the fixed state $(\bar\mu,\delta_0)$ is stable: for any $(\mu,\nu)$ with $\mu$ close to $\bar\mu$ having regular density (e.g. Lipschitz) and $\nu$ being supported on $(-\eta,\eta)$ for a sufficiently small $\eta>0$, $\mc F^n(\mu,\nu)\rightarrow (\bar\mu,\delta_0)$ weakly;
 \item[ii)] if $\alpha=0$ -- i.e. no interaction -- or $\bar\mu(\psi)=0$ -- i.e. no net effect of the interaction --  $F_{(\mu,\nu),2}=f$ and the partially synchronized state $(\bar \mu,\,\delta_0)$ has no hope of being stable; 
 \item[iii)]  adding small interaction terms to cluster 1, i.e.  
 \begin{equation*}\begin{array}{ll}
F_{(\mu,\nu),1}(x)&= f\left(x+ \epsilon\int h_{11}(x,y)d\mu(y)+  \epsilon \int h_{12}(x,y)d\nu(y)\right)\\
F_{(\mu,\nu),2}(x)&= f\left(x+\alpha\int  \phi(x)\psi(y)d\mu(y)\right)
\end{array}
\end{equation*}
(where $h_{11}(0,y)=h_{12}(0,y)=0$) preserves the stability of a partially synchronized state. The main difficulty here is that perturbing the synchronized state $\delta_0$ can potentially change the evolution of cluster 1 in such a way that its state gets away from $\bar\mu$ under evolution, and in turn this can destroy the stability of $\delta_0$. We are going to show that this cannot happen whenever $\epsilon$ is sufficiently small.
 \end{itemize}
 \end{remark}
Below we provide a framework to make all of this precise and prove points i) and iii) of the remark above (see Sect. \ref{Sec:ExContinued}).

\subsection{A criterion for stability of partially synchronized states}
In this section, unless specified otherwise, we consider a network of $N$ clusters where the clusters are divided into two groups, Group 1 and Group 2,  one made of $N_1$  and the other of $N_2=N-N_1$ clusters. Without loss of generality we can assume that the clusters in Group 1 have been labeled $\{1,...,N_1\}$ and those in Group 2 have labels $\{N_1+1,...,N\}$. 

Define $\Pi_i:\mc M_1^N\rightarrow \mc M_1^{N_i}$ the projections on the first $N_1$ coordinates and last $N_2$ coordinates respectively. After fixing $\bo \nu\in\mc M_1^{N_2}$, one can define $\mc F_{1,\bo\nu}:\mc M_1^{N_1}\rightarrow \mc M_1^{N_1}$ as $\mc F_{1,\bo\nu}\mu:= \Pi_1\mc F(\bo\mu,\bo\nu)$. $\mc F_{2,\bo\mu}:\mc M_1^{N_2}\rightarrow \mc M_1^{N_2}$ is defined analogously.

To fix ideas, we think of the second group of clusters as those that synchronize, while those in the first group might be unsynchronized. Under this perspective, for every  fixed $\bo \mu\in\mc M_1^{N_1}$, define $G_{\bo \mu}:\T^{N_2}\rightarrow \T^{N_2}$ and $g_{\bo \mu,i}:\T^{N_2}\rightarrow \R$ as in \eqref{Eq:DefG} and \eqref{Eq:Defgi} for the self-consistent operator $\mc F_{2,\bo \mu}$. 

\begin{theorem}\label{Thm:PartialSynchronization}
Consider a network with two groups of clusters as above, and suppose that there are $\mc N'\subset\mc M_1^{N_1}$, $U\subset \T^{N_2}$,  $\epsilon_0>0$, $\delta>0$, and $\lambda\in[0,1)$ such that 
\begin{itemize}
\item[A1)]   for every $\bo \mu\in \mc N'$, $G_{\bo\mu}(U) \subset U$,  $d(G_{\bo\mu}(\bar U),\partial U)>\delta$ and $|g'_{\bo \mu,i}(x)|\le \lambda$ for all $x\in U$;
\item[A2)] letting $\mc S_{\epsilon,U}:= \{ \bo\nu\in \mc M_1^{N_2}: \,|\supp \bo\nu|_\infty<\epsilon, \,\, \supp\bo\nu\subset U\}$, there is $\mc N\subset \mc N'$ s.t.
\[
\Pi_1\mc F^n (\mc N\times \mc S_{\epsilon,U} )\subset \mc N',\quad\quad\forall n\in\N, \quad \forall \epsilon \in (0,\epsilon_0].
\]
\end{itemize} 
Then there is $\epsilon\in(0,\epsilon_0]$ such that  if $(\bo\mu,\bo\nu)\in\mc N\times \mc S_{\epsilon,U}$, then
\begin{equation}\label{Eq:THm4Thes}
\lim_{n\rightarrow \infty}d_W\left( \mc F^n(\bo\mu,\bo\nu), \, \mc {PS}_U\right)=0,
\end{equation}
\end{theorem}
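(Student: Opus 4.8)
The plan is to reduce Theorem \ref{Thm:PartialSynchronization} to an application of (the argument behind) Theorem \ref{Thm:CondStabSyncedstates}, but with the ambient finite-dimensional dynamics depending on the non-synchronized part of the state, which now varies along the orbit. Fix $(\bo\mu,\bo\nu)\in\mc N\times\mc S_{\epsilon,U}$ and write $\bo\mu^{(n)}:=\Pi_1\mc F^n(\bo\mu,\bo\nu)$ and $\bo\nu^{(n)}:=\Pi_2\mc F^n(\bo\mu,\bo\nu)$. By hypothesis A2) we have $\bo\mu^{(n)}\in\mc N'$ for all $n$, so assumption A1) applies at every step: the family of maps $\{G_{\bo\mu^{(n)}}\}_n$ all leave $U$ invariant with a uniform ``inward'' margin $\delta$, and the associated derivative functions $g_{\bo\mu^{(n)},i}$ are uniformly bounded in modulus by $\lambda<1$ on $U$. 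This is exactly the uniform control needed to run the four-step scheme in the proof of Theorem \ref{Thm:CondStabSyncedstates}, now for the non-autonomous sequence of self-consistent operators $\mc F_{2,\bo\mu^{(n)}}$ acting on the $N_2$ synchronizing coordinates.

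The key steps, in order, are as follows. First, I would establish the analogue of Step 1 of Theorem \ref{Thm:CondStabSyncedstates}: by the smoothness of the $f_i$ and $h_{ij}$ and compactness, for every $\delta'>0$ there is $\epsilon'>0$ such that whenever $\supp\bo\nu^{(n)}\subset B^\infty_{\epsilon'}(\bo y)$ one has $|G_{\bo\mu^{(n)}}(\bo y)-F_{(\bo\mu^{(n)},\bo\nu^{(n)}),\,2}(\bo y)|<\delta'$ and $|g_{\bo\mu^{(n)},i}(\bo y)-(F_{(\bo\mu^{(n)},\bo\nu^{(n)}),\,2,i})'(z)|<\delta'$ for all $z\in\supp\bo\nu^{(n)}_i$; crucially, the constants $\epsilon',\delta'$ can be chosen independently of $n$ since $\bo\mu^{(n)}$ ranges in a fixed set and only enters through integration against a fixed finite family of kernels. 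Second, using A1), pick $\Lambda\in(\lambda,1)$; then for $\epsilon$ small enough the true derivatives $(F_{(\bo\mu^{(n)},\bo\nu^{(n)}),\,2,i})'$ are bounded by $\Lambda$ on $\supp\bo\nu^{(n)}_i$, provided $\supp\bo\nu^{(n)}\subset U$. Third, choose $\delta'$ small enough that $\delta'<\delta$ (so the perturbed maps still map $\bar U$ into $U$, using the margin in A1)) and such that the contraction of arc-lengths beats the error term; then by induction on $n$ one simultaneously maintains $\supp\bo\nu^{(n)}\subset U$ and $|\supp\bo\nu^{(n)}|_\infty<\Lambda^n|\supp\bo\nu^{(0)}|_\infty$. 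Fourth, this geometric decay of the diameters of the supports of $\bo\nu^{(n)}$ gives, via the definition of $d_W$ and $\mc{PS}_U=\mc M_1^{N_1}\times\mc S_U$, that $d_W(\mc F^n(\bo\mu,\bo\nu),\mc{PS}_U)\le|\supp\bo\nu^{(n)}|_\infty\to 0$, which is \eqref{Eq:THm4Thes}. (One should remark that here, unlike in Theorem \ref{Thm:CondStabSyncedstates}, no claim is made about convergence of the $\bo\mu^{(n)}$-component; only the synchronizing coordinates are controlled, which is all $\mc{PS}_U$ requires.)

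The main obstacle is the non-autonomy introduced by the feedback from Group 2 onto Group 1: a priori, perturbing the synchronized clusters changes $\mc F_{1,\bo\nu^{(n)}}$ and hence could push $\bo\mu^{(n)}$ out of the region $\mc N'$ where the contraction estimates of A1) hold. This is precisely why hypothesis A2) is imposed as a standing assumption — it guarantees $\bo\mu^{(n)}\in\mc N'$ for all $n$ and all admissible $\epsilon$, decoupling the bookkeeping — so in the proof itself the obstacle is sidestepped by assumption, and the remaining work is to verify that the uniform constants in Steps 1–3 can indeed be taken independent of the (varying) $\bo\mu^{(n)}\in\mc N'$. The only subtlety to watch is that the induction in Step 3 must close: the choice of $\epsilon$ (equivalently $\epsilon'$, $\delta'$) has to be made once and for all before the induction starts, using the $n$-uniform estimates, exactly as in the original proof, and one must check that $\epsilon\le\epsilon_0$ so that A2) remains applicable throughout.
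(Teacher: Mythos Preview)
Your proposal is correct and follows essentially the same approach as the paper: invoke A2) to keep $\bo\mu^{(n)}\in\mc N'$ for all $n$, then use A1) together with the perturbation estimate of Step~1 from Theorem~\ref{Thm:CondStabSyncedstates} (closeness of $G_{\bo\mu^{(n)}}$ and $F_{(\bo\mu^{(n)},\bo\nu^{(n)}),2}$ when $|\supp\bo\nu^{(n)}|_\infty$ is small) to maintain $\supp\bo\nu^{(n)}\subset U$ and obtain the geometric contraction $|\supp\bo\nu^{(n)}|_\infty\le\Lambda^n|\supp\bo\nu|_\infty$ by induction. The paper's own proof is more terse---it compresses your Steps~1--3 into a couple of lines and uses $\lambda$ directly rather than an intermediate $\Lambda\in(\lambda,1)$---but the structure and the key ingredients (the $\delta$-margin from A1) to absorb the $G_{\bo\mu}$ vs.\ $F_{\bo\mu,2}$ discrepancy, and the uniformity in $\bo\mu\in\mc N'$) are identical.
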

Assumption A1) ensures that  the clusters in Group 2 synchronize as long as the state of Group 1 is controlled and belongs to a given set $\mc N$. Assumption A2) requires that when the clusters of Group 2 are close to synchrony, the clusters in Group 1 evolve in a controlled way and their state remains inside $\mc N$. A typical situation we have in mind is when, for $\nu$ close to a synchronized state, the maps $F_{(\bo \mu,\bo\nu),i} $ for $i=1,...,N_1$ are close to maps $\bar F_i$ with a spectral gap whose invariant measure is stable under perturbations. It is known that in many such situations, an arbitrary composition of maps which are all close to  given statistically stable maps $\bar F_i$, keep the measures close to the invariant measures of $\bar F_i$ (see e.g. \cite{tanzi2019robustness}.)
\begin{proof}[Proof of Theorem \ref{Thm:PartialSynchronization}]
For  $\epsilon>0$ take  $\bo\nu\in\mc M_1^{N_2}$ with $|\supp\bo\nu|_{\infty}<\epsilon$ and $\supp\bo\nu\subset U$. It immediately follows from A1) that $|\supp\mc F_{\bo\mu,2}\bo\nu|_{\infty}<\lambda\epsilon$.  
Arguing as in Step 1 of the  proof of Theorem \ref{Thm:CondStabSyncedstates}, one can pick  $\epsilon>0$ sufficiently small, so that 
\[
d(G_{\bo\mu}(\bo x), F_{\bo\mu,2}(\bo x))<\frac\delta2,\quad\quad\forall \bo x\in\T^{N_2}
\]
where $F_{\bo\mu,2}=(F_{\bo\mu,N_1+1},...,F_{\bo\mu,N})$.
Therefore, if $|\supp\bo\nu|_{\infty}<\epsilon$ 
\[
d(\supp \mc F_{\bo\mu,2}\bo\nu, \partial U)= d(F_{\bo\mu,2}(\supp\nu),\partial U) \ge d(G_{\bo\mu}(\supp\nu),\partial U)-\frac\delta2=\frac\delta2,
\]
and  $\supp \mc F_{\bo\mu,2}\bo\nu\subset U$.

It follows from A2) that if $(\bo\mu,\bo\nu)$ are such that $\bo\mu\in\mc N$ and $|\supp\bo\nu|_{\infty}<\epsilon$ and $\supp\bo\nu\subset U$, then by A1) $\Pi_1\mc F_{(\bo\mu,\bo\nu)}^n(\bo\mu,\bo\nu)\in\mc N'$ and $|\supp\Pi_2\mc F^n_{(\bo\nu,\bo\mu)}(\bo\mu,\bo\nu)|_{\infty}<\lambda^n\epsilon$, $\supp\Pi_2\mc F^n_{(\bo\nu,\bo\mu)}(\bo\mu,\bo\nu)\subset U$ for every $n\in \N$ which implies \eqref{Eq:THm4Thes}.
\end{proof}

This theorem has mostly a descriptive purpose as the conclusion immediately follows from the assumptions. In the next section we show how to verify assumptions A1) and A2)  to prove points i) and iii) of Remark \ref{Rem:ExampleINteract}, and in Sect. \ref{Sec:Chimeras} we present an application to chimera states. 

\

\subsection{Example \ref{Ex:Unsynceddrivsync} continued}\label{Sec:ExContinued}
In this section we apply Theorem \ref{Thm:PartialSynchronization} to prove points i) and iii) of Remark \ref{Rem:ExampleINteract}.

\begin{proof}[Proof of Remark \ref{Rem:ExampleINteract} point i)]
We are going to exploit the spectral properties of $f$. Consider the Banach space $\mc C^1$ of absolutely continuous finite signed measures with densities in  $C^1(\T,\R)$ . We endow this space with the following norm: for a measure $d\mu=\rho dm$ we define $\|\mu\|=\int|\rho'|+\int|\rho|$. We further consider $L^1$ as a weak norm $|\cdot|_{L^1} \leq \|\cdot\|$ on this space as $|\mu|_{L^1}=\int|\rho|$. It is known that the transfer operator $f_*$ has a spectral gap on this Banach space, i.e. there is $\bar\mu\in \mc C^1$ such that $f_*\bar\mu=\bar\mu$, and there are $C>0$ and $\lambda\in (0,1)$ satisfying 
\begin{equation}\label{Eq:SpecGap}
\|f_*^n\mu\|\le C\lambda^n\|\mu\|,\quad\quad\forall\mu\in  \mc C^1_0
\end{equation}
where $\mc C^1_0=\{\mu\in \mc C^1:\,\mu(\T)=0\}$. 
Fix any $\lambda'\in (\lambda,1)$ and define  the norm $\|\cdot\|'$ on $ \mc C^1_0$ by
\[
\|\mu\|'= \sum_{k=0}^\infty \frac{\|f_*^n\mu\|}{(\lambda')^n}
\]
which is well defined in virtue of \eqref{Eq:SpecGap}. Notice that 
\begin{equation}\label{Eq:Equivalenceofnorms}
\|\mu\|\le \|\mu\|'\le  \frac{C}{1-\frac{\lambda}{\lambda'}} \|\mu\|
\end{equation}
and so $\|\cdot\|'$ and $\|\cdot\|$ are equivalent. Let us call $\tilde C:=  \frac{C}{1-\frac{\lambda}{\lambda'}}$. The advantage of working with $\|\cdot\|'$ is that $f_*$ contracts in one step with respect to this norm:
\begin{equation}\label{Eq:Contractionprimednorm}
\|f_*\mu\|'= \sum_{k=0}^\infty \frac{\|f_*^{n+1}\mu\|}{(\lambda')^n}= \lambda'\|\mu\|'.
\end{equation}

Now we proceed to verify the assumptions of Theorem \ref{Thm:PartialSynchronization}. Notice that $F_{(\mu,\nu),2}=F_{\mu,2}$ since $F_{ (\mu,\nu),2}$ does not depend on $\nu$. Recall that $F_{\mu,2}(0)=0$ for every  $\mu$ and that, with the assumptions on $\phi$ and $\psi$, and the choice of $\alpha$, $|(F_{\bar \mu,2})'(0)|<1$. Now, 
\begin{align*}
| F_{\mu,2}'(0)-F_{\bar \mu,2}'(0)| &= \left| f'\left(0\right)(1+\alpha\mu(\psi) \phi'(0))-f'\left(0\right)\left(1+\alpha\bar\mu(\psi) \phi'(0)\right)\right|\\
&=|\alpha f'\left(0\right)\phi'(0)||\mu(\psi)-\bar\mu(\psi)|.
\end{align*}
By  H\"older inequality we get that 
\[
|\mu(\psi)-\bar\mu(\psi)|\le \sup_x|\psi(x)|\, |\mu-\bar\mu|_{L^1}.
\]
Therefore, one can find $\delta_1>0$ such that if $|\mu-\bar\mu|_{L^1}<\delta_1$
\[
| F_{\mu,2}'(0)|<1.
\]
With an analogous reasoning involving the second derivative, we conclude that, possibly decreasing $\delta_1>0$, there is all neighbourhood $I=[-\Delta,\Delta]\subset \T$ of $0$ such that 
\begin{equation}\label{Eq:unifConFmu1}
\left| F_{\mu,2}'|_{I}\right|<1
\end{equation}
for every $\mu$ such that $|\mu-\bar \mu|_{L^1}<\delta_1$. 

Pick $\mc N':=\{\mu\in\mc B\cap \mc M_1:\,\|\mu-\bar\mu\|'<\delta_1 \}$ and $U=I$. It follows from the first inequality in \eqref{Eq:Equivalenceofnorms} together with \eqref{Eq:unifConFmu1} that for every $\mu\in \mc N'$
\[
G_{\mu}(U)=F_{\mu,2}(I)\subset I=U
\]
and that $d(G_{\mu}(U),\partial U)>\delta$ for some $\delta>0$\footnote{If $| F_{\mu,2}'|_{I}|\le \lambda_1<1$, then $\delta=(1-\lambda_1)\Delta$. }. Which proves that this choice of $\mc N'$ and $U$ satisfies assumption A1). By \eqref{Eq:Contractionprimednorm},  $\|f_*\mu-\bar\mu\|'=\|f_*\mu-f_*\bar\mu\|'<\lambda'\delta_1$, therefore
\[
\mc F_{1,\nu}(\mc N')=f_*(\mc N')\subset \{\mu\in\mc B\cap \mc M_1:\, \|\mu-\bar \mu\|'<\lambda'\delta_1\}\subset \mc N'.
\]
Moreover, $|\mu-\bar \mu|_{L^1}\le\|\mu-\bar \mu\|'$ and therefore \eqref{Eq:unifConFmu1} is satisfied for every $\mu\in\mc N$. Picking $\mc N=\mc N'$,  assumption A2) is proved.
\end{proof}

\begin{proof}[Proof of Remark \ref{Rem:ExampleINteract} point iii)] 
Define the  distance $d_{\mc B}$ between linear operators $\mc P_1,\mc P_2:\mc B\rightarrow \mc B$
\[
d_{\mc B}(\mc P_1,\mc P_2):=\sup_{\eta\in \mc B:\|\eta\|\le 1}|\mc P_1\eta-\mc P_2\eta|_{L^1}.
\]


Remember that $F_{(\mu,\nu),1}=f\left(x+ \epsilon\int h_{11}(x,y)d\mu(y)+  \epsilon \int h_{12}(x,y)d\nu(y)\right)$ according to point iii) of Remark \ref{Rem:ExampleINteract}.

The following lemma shows that the transfer operator of $F_{(\mu,\nu),1}$ and $f$ are close, more precisely at a distance of order $\epsilon$.
\begin{lemma}\label{Lem:Lemmafdistance} There is $C_1>0$ such that  
\[
d_{\mc B}((F_{(\mu,\nu),1})_*,f_*)<C_1\epsilon.
\]
\end{lemma}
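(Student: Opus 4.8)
The goal is to show that the transfer operators of $F_{(\mu,\nu),1}$ and $f$ are $\mathcal{O}(\epsilon)$-close in the $d_{\mathcal B}$-distance. Write $\Psi_{\mu,\nu}(x) := x + \epsilon\int h_{11}(x,y)\,d\mu(y) + \epsilon\int h_{12}(x,y)\,d\nu(y)$, so that $F_{(\mu,\nu),1} = f\circ\Psi_{\mu,\nu}$ and $F_{(\mu,\nu),1} = f\circ\Psi_{\mu,\nu}$. The key observation is that $\Psi_{\mu,\nu}$ is an $\mathcal{O}(\epsilon)$-$C^1$-perturbation of the identity: indeed $\|\Psi_{\mu,\nu} - \mathrm{id}\|_{C^0} \le \epsilon(\sup|h_{11}| + \sup|h_{12}|)$ and $\|\Psi_{\mu,\nu}' - 1\|_{C^0} \le \epsilon(\sup|\partial_x h_{11}| + \sup|\partial_x h_{12}|)$, uniformly in $\mu,\nu\in\mathcal M_1$ by the standing smoothness assumptions on the $h_{ij}$. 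In particular, for $\epsilon$ small enough $\Psi_{\mu,\nu}$ is a $C^2$ diffeomorphism of $\T$ close to the identity, and its transfer operator $(\Psi_{\mu,\nu})_*$ acts on densities by $((\Psi_{\mu,\nu})_*\rho)(x) = \rho(\Psi_{\mu,\nu}^{-1}(x))\,|(\Psi_{\mu,\nu}^{-1})'(x)|$.

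\textbf{Key steps.} First I would estimate $d_{\mathcal B}((\Psi_{\mu,\nu})_*, \mathrm{Id})$. For a density $\rho$ with $\|\rho dm\|\le 1$ (so $\int|\rho| + \int|\rho'| \le 1$), one bounds $|(\Psi_{\mu,\nu})_*\rho - \rho|_{L^1} = \int|\rho(\Psi^{-1}(x))|(\Psi^{-1})'(x)| - \rho(x)|\,dx$ by splitting into the change-of-variable term $\int|\rho(\Psi^{-1}(x))|\,|(\Psi^{-1})'(x)| - 1|\,dx \le \|(\Psi^{-1})' - 1\|_{C^0}\int|\rho|$, which is $\mathcal{O}(\epsilon)$, plus the term $\int|\rho(\Psi^{-1}(x)) - \rho(x)|\,dx$. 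For the latter, since $\Psi^{-1}$ moves points by at most $\mathcal{O}(\epsilon)$ and $\rho$ has an $L^1$ derivative bounded by $1$, the standard estimate $\int|\rho(x-s) - \rho(x)|\,dx \le |s|\int|\rho'|$ gives a bound $\mathcal{O}(\epsilon)\int|\rho'| = \mathcal{O}(\epsilon)$. Hence $d_{\mathcal B}((\Psi_{\mu,\nu})_*, \mathrm{Id}) \le C'\epsilon$ for a constant $C'$ depending only on the $C^1$-norms of $h_{11}, h_{12}$. Second, since $(F_{(\mu,\nu),1})_* = f_* \circ (\Psi_{\mu,\nu})_*$ and $f_* = f_* \circ \mathrm{Id}$, I would write
\[
d_{\mathcal B}((F_{(\mu,\nu),1})_*, f_*) = \sup_{\|\eta\|\le 1}|f_*(\Psi_{\mu,\nu})_*\eta - f_*\eta|_{L^1} = \sup_{\|\eta\|\le 1}|f_*\big((\Psi_{\mu,\nu})_*\eta - \eta\big)|_{L^1}.
\]
Using that $f_*$ is a contraction in the $L^1$ (weak) norm — $|f_*\zeta|_{L^1}\le |\zeta|_{L^1}$, which holds for any transfer operator — this is at most $\sup_{\|\eta\|\le 1}|(\Psi_{\mu,\nu})_*\eta - \eta|_{L^1} = d_{\mathcal B}((\Psi_{\mu,\nu})_*, \mathrm{Id}) \le C'\epsilon$, and I set $C_1 := C'$.

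\textbf{Main obstacle.} The only delicate point is the estimate $\int |\rho(\Psi^{-1}(x)) - \rho(x)|\,dx \le \|\Psi^{-1} - \mathrm{id}\|_{C^0}\int|\rho'|$, which is the $L^1$-continuity of translation expressed through the fundamental theorem of calculus; it requires $\rho\in C^1$ (or $W^{1,1}$), which is exactly the regularity built into the Banach space $\mathcal C^1$, so it goes through. One must also be slightly careful that $\Psi^{-1}$ is not literally a translation, but writing $\rho(\Psi^{-1}(x)) - \rho(x) = \int_0^1 \rho'\big(x + t(\Psi^{-1}(x)-x)\big)\,dt \cdot (\Psi^{-1}(x)-x)$ and integrating in $x$, then bounding via a uniform Jacobian estimate for the near-identity maps $x\mapsto x + t(\Psi^{-1}(x)-x)$, recovers the same $\mathcal{O}(\epsilon)$ bound. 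Everything else is routine, and the uniformity in $\mu,\nu$ is automatic because all bounds depend only on the fixed functions $h_{11},h_{12}$ and not on the measures.
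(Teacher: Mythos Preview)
Your proof is correct, but it takes a genuinely different route from the paper's.

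The paper does not estimate $d_{\mc B}$ directly. Instead it invokes a black-box result of Keller \cite[Lemma~13]{K82}, which bounds $d_{\mc B}(T_{1*},T_{2*})$ by $12\,d(T_1,T_2)$, where $d$ is a Skorokhod-type distance between the maps themselves (existence of a near-identity diffeomorphism $\sigma$ with $T_2=T_1\circ\sigma$ on a large set, $|\sigma-\mathrm{id}|<\kappa$, $|1/\sigma'-1|<\kappa$). Since $F_{(\mu,\nu),1}=f\circ\Phi_{(\mu,\nu),1}$ with $\Phi_{(\mu,\nu),1}$ an $\mathcal O(\epsilon)$-perturbation of the identity, the choice $\sigma=\Phi_{(\mu,\nu),1}$ and $A=\T$ immediately gives $d(F_{(\mu,\nu),1},f)\le C_1\epsilon$.

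Your argument is more elementary and self-contained: you exploit the factorization $(F_{(\mu,\nu),1})_*=f_*\circ(\Psi_{\mu,\nu})_*$, reduce via the $L^1$-contractivity of $f_*$ to estimating $d_{\mc B}((\Psi_{\mu,\nu})_*,\Id)$, and control the latter by hand using the fundamental theorem of calculus and the $C^1$-regularity of the densities in the unit ball of $\|\cdot\|$. This makes transparent \emph{why} the strong norm (and in particular the $\int|\rho'|$ term) is needed---it controls the $L^1$-modulus of continuity of $\rho$ under the $\mathcal O(\epsilon)$-shift induced by $\Psi^{-1}$. The paper's route is shorter once Keller's lemma is taken for granted, and applies in slightly greater generality (no need for one map to factor through the other); yours avoids the external reference and is arguably more instructive in this specific setting.
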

\begin{proof}	
By \cite[Lemma 13]{K82}, we have $d_{\mc B}((F_{(\mu,\nu),1})_*,f_*) \leq 12d(F_{(\mu,\nu),1},f)$, where
\begin{align*}
d(T_1,T_2)=\inf \{&\kappa > 0 : \exists A \subset \T \text{ and } \sigma: \T \to \T \text{ such that } Leb(A) > 1-\kappa \\
& \sigma \text{ is a diffeomorphism, } T_2|_A=T_1 \circ \sigma|_A \text{ and for all } x\in \T: \\
& |\sigma(x)-x| < \kappa, |1/\sigma'(x)-1| < \kappa\}.
\end{align*}
Now since $F_{(\mu,\nu),1} = f \circ \Phi_{(\mu,\nu),1}$ where 
\[
\Phi_{(\mu,\nu),1}(x)=x+ \epsilon\int h_{11}(x,y)d\mu(y)+  \epsilon \int h_{12}(x,y)d\mu_2(y)
\]
is a diffeomorphism of $\T$ (for $\varepsilon$ sufficiently small) and we have the bounds
\begin{align*}
|\Phi_{(\mu,\nu),1}(x)-x| &\leq \varepsilon(\sup|h_{11}|+\sup|h_{12}|) \\
|1/\Phi'_{(\mu,\nu),1}(x)-1| &\leq \varepsilon\cdot \frac{\sup|\partial_xh_{11}|+\sup|\partial_xh_{12}|}{1-\varepsilon(\sup|\partial_xh_{11}|+\sup|\partial_xh_{12}|)} \\
\end{align*}
we can conclude that $d(F_{(\mu,\nu),1},f) \leq C_1\varepsilon$ and by consequence $d_{\mc B}((F_{(\mu,\nu),1})_*,f_*) \leq C_1\varepsilon$, which proves the lemma.
\end{proof}

We now use the above result to prove that when $\epsilon>0$ is sufficiently small, the maps $F_{(\mu,\nu),1}$ satisfy Lasota-Yorke inequalities with uniform constants. 
\begin{lemma}\label{Lem:UnifLYINeq}
There exist   $\alpha \in (0,1)$, $K > 0$, and $\varepsilon_1 > 0$ such that when $|\varepsilon| < \varepsilon_1$
\begin{align*}
\|(F_{(\mu,\nu), 1})_*\eta\| &\leq  \alpha\|\eta\|+K|\eta|_{L^1}.
\end{align*}
for all $\eta \in \mc C^1$ and $(\mu,\nu) \in \mc M_1^2$.
\end{lemma}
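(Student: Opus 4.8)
\textbf{Proof plan for Lemma \ref{Lem:UnifLYINeq}.}

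The plan is to derive the uniform Lasota--Yorke inequality for $(F_{(\mu,\nu),1})_*$ by perturbing the known Lasota--Yorke inequality for the transfer operator $f_*$. Recall that $f$ is a fixed $C^2$ uniformly expanding map with $|f'|\ge\sigma>1$, so its transfer operator satisfies a Lasota--Yorke inequality on $\mc C^1$: there are $\alpha_0\in(\sigma^{-1},1)$ and $K_0>0$ with $\|f_*\eta\|\le \alpha_0\|\eta\|+K_0|\eta|_{L^1}$ for all $\eta\in\mc C^1$. Since $F_{(\mu,\nu),1}=f\circ\Phi_{(\mu,\nu),1}$, I would first note that $(F_{(\mu,\nu),1})_*=f_*\circ(\Phi_{(\mu,\nu),1})_*$, so it suffices to control $(\Phi_{(\mu,\nu),1})_*$ and then feed the result into the Lasota--Yorke inequality for $f_*$.

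The key point is that $\Phi_{(\mu,\nu),1}$ is an $\mc O(\epsilon)$-$C^2$-perturbation of the identity, \emph{uniformly} in $(\mu,\nu)\in\mc M_1^2$: indeed $\Phi_{(\mu,\nu),1}(x)=x+\epsilon\int h_{11}(x,y)d\mu(y)+\epsilon\int h_{12}(x,y)d\nu(y)$, and because $h_{11},h_{12}$ are $C^2$ with derivatives bounded independently of the measures, one has $\|\Phi_{(\mu,\nu),1}-\mathrm{id}\|_{C^2}\le C\epsilon$ with $C$ depending only on $\sup|h_{1j}|,\sup|\partial_x h_{1j}|,\sup|\partial_x^2 h_{1j}|$. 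For $\epsilon$ small enough $\Phi_{(\mu,\nu),1}$ is a $C^2$ diffeomorphism of $\T$ with $|(\Phi_{(\mu,\nu),1})'-1|\le C\epsilon$ and $|(\Phi_{(\mu,\nu),1})''|\le C\epsilon$. For such a diffeomorphism, the transfer operator acts on a density $\rho$ by $(\Phi_{(\mu,\nu),1})_*\rho=(\rho/\Phi')\circ\Phi^{-1}$, and a direct computation of the $\mc C^1$-norm gives $\|(\Phi_{(\mu,\nu),1})_*\eta\|\le (1+C'\epsilon)\|\eta\|$ for all $\eta\in\mc C^1$, with $C'$ uniform in $(\mu,\nu)$ --- the factor $(1+C'\epsilon)$ coming from the $\|(\Phi^{-1})'\|_{C^0}$ and $\|(\Phi^{-1})''\|_{C^0}$ bounds. (One could also cite Lemma \ref{Lem:Lemmafdistance} together with a standard stability-of-Lasota--Yorke argument, e.g. \cite{K82}, but the direct estimate is cleaner here.)

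Combining, for $\eta\in\mc C^1$ and any $(\mu,\nu)\in\mc M_1^2$,
\[
\|(F_{(\mu,\nu),1})_*\eta\|=\|f_*(\Phi_{(\mu,\nu),1})_*\eta\|\le \alpha_0\|(\Phi_{(\mu,\nu),1})_*\eta\|+K_0|(\Phi_{(\mu,\nu),1})_*\eta|_{L^1}\le \alpha_0(1+C'\epsilon)\|\eta\|+K_0|\eta|_{L^1},
\]
using that the transfer operator preserves the $L^1$-norm of densities. Since $\alpha_0<1$, I can choose $\varepsilon_1>0$ so small that $\alpha:=\alpha_0(1+C'\varepsilon_1)<1$, and then set $K:=K_0$; this yields the claimed inequality with constants independent of $(\mu,\nu)$ and of $\epsilon\in(-\varepsilon_1,\varepsilon_1)$. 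The only mild subtlety --- and the step I would be most careful with --- is making sure every constant in the perturbation estimate for $(\Phi_{(\mu,\nu),1})_*$ is genuinely uniform over the (non-compact but bounded in total variation) family $\{\mu,\nu\}$; this is fine precisely because $\Phi_{(\mu,\nu),1}$ depends on the measures only through the integrals $\int h_{1j}(\cdot,y)d\mu(y)$, whose $C^2$-norms in $x$ are bounded by $\sup$-norms of $h_{1j}$ and its $x$-derivatives regardless of which probability measure is integrated against.
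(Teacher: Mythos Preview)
Your argument is correct, but it takes a different route from the paper. The paper proves the Lasota--Yorke inequality for $(F_{(\mu,\nu),1})_*$ \emph{directly}: it writes out the action on $C^1$ densities via the inverse branches of $F_{(\mu,\nu),1}$, differentiates, and bounds $\int|(\mathcal L_{F_{(\mu,\nu),1}}\rho)'|$ using uniform lower bounds on $|F_{(\mu,\nu),1}'|$ (namely $|F'|\ge \sigma(1-\epsilon(\sup|\partial_x h_{11}|+\sup|\partial_x h_{12}|))$) and uniform upper bounds on $|F_{(\mu,\nu),1}''|$. This yields $\int|(\mathcal L_F\rho)'|\le \alpha\int|\rho'|+D\alpha^2\int|\rho|$ and hence the LY inequality, all in one computation.

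Your approach instead exploits the factorisation $(F_{(\mu,\nu),1})_*=f_*\circ(\Phi_{(\mu,\nu),1})_*$: you take the Lasota--Yorke inequality for $f_*$ as given and reduce the problem to showing that $(\Phi_{(\mu,\nu),1})_*$ is an $\mathcal O(\epsilon)$-perturbation of the identity in the $\mc C^1$-operator norm, which follows from the $C^2$-closeness of $\Phi_{(\mu,\nu),1}$ to $\mathrm{id}$. This is a cleaner, more modular argument: it isolates the expanding mechanism (in $f_*$) from the perturbation (in $\Phi_*$), and would generalise immediately to any situation where the unperturbed map already satisfies a LY inequality. The paper's direct computation is slightly more self-contained (it does not invoke the LY inequality for $f_*$ as a black box) and makes the dependence of the constants on $\sup|\partial_x h_{1j}|$, $\sup|\partial_x^2 h_{1j}|$ fully explicit, but otherwise the two arguments are of comparable length and difficulty. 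Your observation that uniformity in $(\mu,\nu)$ comes for free because the measures enter only through integrals against fixed $C^2$ functions is exactly the point the paper uses as well.
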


\begin{proof}
Denote the density of $\eta$ by $\rho$, the inverse branches of $F_{(\mu,\nu), 1}$ by $ (F_{(\mu,\nu), 1})_i^{-1}$, and the action of $(F_{(\mu,\nu), 1})_*$ on  $C^1$ densities by  $\mathcal L_{F_{(\mu,\nu), 1}}: C^1(\T,\R)\rightarrow C^1(\T,\R)$. Then
\[
\mathcal L_{F_{(\mu,\nu), 1}}\rho=\sum_i \frac{\rho}{|(F_{(\mu,\nu), 1})'|} \circ (F_{(\mu,\nu), 1})_i^{-1}
\]
and
\begin{align*}
&\int |(\mathcal L_{F_{(\mu,\nu), 1}}\rho)'|=\int\left | \left(\sum_i \frac{\rho}{|(F_{(\mu,\nu), 1})'|} \circ (F_{(\mu,\nu), 1})_i^{-1}\right)' \right| \\
&\leq \int \left |\sum_i \frac{\rho'}{| (F_{(\mu,\nu), 1})'|^2} \circ  (F_{(\mu,\nu), 1})_i^{-1} \right|+\int \left |\sum_i \frac{\rho \cdot ( F_{(\mu,\nu), 1})''}{| (F_{(\mu,\nu), 1})'|^3} \circ  (F_{(\mu,\nu), 1})_i^{-1} \right| \\
&\leq  \int \left | \frac{\rho'}{ (F_{(\mu,\nu), 1})'} \right|+\int \left | \frac{\rho \cdot  (F_{(\mu,\nu), 1})''}{( (F_{(\mu,\nu), 1})')^2} \right|
\end{align*}
We can compute that for $\varepsilon$ sufficiently small we have 
\begin{align*}
|(F_{(\mu,\nu), 1})'| \geq \:& \omega(1-\varepsilon(\sup|\partial_xh_{11}|+\sup|\partial_xh_{12}|)) \geq  \frac{1}{\alpha} > 1 \\
|(F_{(\mu,\nu), 1})''|  \leq \:& \sup|f''|(1+\varepsilon(\sup|\partial_xh_{11}|+\sup|\partial_xh_{12}|))^2 \\
&+\varepsilon \cdot \sup|f'|(\sup|\partial_x^2h_{11}|+\sup|\partial_x^2h_{12}|) \leq D
\end{align*}	
for some $\alpha \in (0,1)$ and $D>0$. This gives
\[
\int |(\mathcal L_{F_{(\mu,\nu), 1}}\rho)'| \leq  \alpha \int |\rho'|+ D \alpha^2\int|\rho|_{}
\]
and finally
\[
\|(F_{(\mu,\nu), 1})_*\eta\| \leq \alpha \|\eta\|+K|\eta|_{L^1}
\]
with $K=1+D\alpha^2$.
\end{proof}

Given $(\mu^{(0)},\nu^{(0)})\in\mc M_1^2$, let $(\mu^{(n)},\nu^{(n)}):=\mc F^n(\mu^{(0)},\nu^{(0)})$ its orbit under the self-consistent transfer operator. 

\begin{lemma}\label{Lem:UniformBoundStrongNorm}
Assume that $|\epsilon|<\epsilon_1$, with $\epsilon_1$ as in Lemma \ref{Lem:UnifLYINeq}. There is $K>0$ such that if $\|\mu^{(0)}\|\le K$, then $\|\mu^{(k)}\|\le K$ for every $k\in\N$. In particular, $\|\bar \mu\|\le K$.
\end{lemma}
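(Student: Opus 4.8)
The plan is to iterate the uniform Lasota--Yorke inequality of Lemma \ref{Lem:UnifLYINeq}, using the elementary but crucial fact that every iterate $\mu^{(n)}$ is a probability measure, so its weak norm $|\mu^{(n)}|_{L^1}$ is identically $1$.

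First I would record that $(F_{(\mu,\nu),1})_*$, being the transfer operator of a circle map, sends probability measures to probability measures and — by the computation in the proof of Lemma \ref{Lem:UnifLYINeq} — preserves $\mc C^1$; hence, starting from $\mu^{(0)}\in\mc C^1$, each $\mu^{(n)}$ has a nonnegative density $\rho^{(n)}$ with $\int\rho^{(n)}\,dm=1$, so $|\mu^{(n)}|_{L^1}=1$ for all $n$. Since $\mu^{(n+1)}=(F_{(\mu^{(n)},\nu^{(n)}),1})_*\mu^{(n)}$, Lemma \ref{Lem:UnifLYINeq} then gives, with $\alpha\in(0,1)$ and the constant appearing there (call it $K_1$) independent of $(\mu,\nu)$,
\[
\|\mu^{(n+1)}\|\le\alpha\|\mu^{(n)}\|+K_1\qquad\text{for all }n\in\N.
\]

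Next I would set $K:=K_1/(1-\alpha)$ and verify by induction that $\|\mu^{(0)}\|\le K$ implies $\|\mu^{(k)}\|\le K$ for every $k$: the inductive step is immediate from $\alpha K+K_1=\alpha\,K_1/(1-\alpha)+K_1=K_1/(1-\alpha)=K$, and unrolling the recursion yields the quantitative bound $\|\mu^{(n)}\|\le\alpha^n\|\mu^{(0)}\|+K_1\sum_{j=0}^{n-1}\alpha^j\le K$. For the final assertion I would apply the same inequality to $f_*$, which is exactly the $\epsilon=0$ (equivalently $h_{11}\equiv h_{12}\equiv 0$) instance of Lemma \ref{Lem:UnifLYINeq} — or simply the classical Lasota--Yorke inequality for the $C^2$ uniformly expanding map $f$ underlying the spectral gap \eqref{Eq:SpecGap}. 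Since $\bar\mu\in\mc C^1$ is a probability measure with $f_*\bar\mu=\bar\mu$, we get $\|\bar\mu\|=\|f_*^n\bar\mu\|\le\alpha^n\|\bar\mu\|+K$, and letting $n\to\infty$ gives $\|\bar\mu\|\le K$.

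I do not expect a genuine obstacle here: the real work has already been done in Lemma \ref{Lem:UnifLYINeq}, namely that the Lasota--Yorke constants can be chosen uniformly over all states $(\mu,\nu)\in\mc M_1^2$. Granting that, the present lemma is a one-line Gr\"onwall-type iteration of $\|\mu^{(n+1)}\|\le\alpha\|\mu^{(n)}\|+K_1$ against the constant weak norm of a probability density; the only points to keep straight are that the bound $K=K_1/(1-\alpha)$ is (slightly) larger than the constant $K$ of Lemma \ref{Lem:UnifLYINeq} and should not be conflated with it, and that taking $\epsilon=0$ is legitimate when specializing to $f$.
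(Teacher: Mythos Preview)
Your proposal is correct and follows exactly the approach the paper intends: the paper's proof is a single sentence saying the result ``immediately follows'' from the representation $\mu^{(k)}=(F_{(\mu^{(k-1)},\nu^{(k-1)}),1})_*\cdots(F_{(\mu^{(0)},\nu^{(0)}),1})_*\mu^{(0)}$ together with the uniform Lasota--Yorke inequalities of Lemma~\ref{Lem:UnifLYINeq}, and you have simply spelled out that iteration (including the choice $K=K_1/(1-\alpha)$ and the argument for $\bar\mu$) in detail. Your caution about not conflating the constant $K$ of Lemma~\ref{Lem:UnifLYINeq} with the $K$ of the present lemma is well placed.
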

\begin{proof}
Immediately follows from the fact that 
\[
\mu^{(k)}=(F_{ (\mu^{(k-1)},\nu^{(k-1)}),1})_*...(F_{ (\mu^{(0)},\nu^{(0)}),1})_*\mu^{(0)}\]
 and that by Lemma \ref{Lem:UnifLYINeq} $(F_{ (\mu^{(i)},\nu^{(i)}),1})_*$ all satisfy Lasota-Yorke inequalities with uniform constants.
\end{proof}
For every $k\in\N\cup\{0\}$
\begin{align}
\left| \mu^{(k)}-\bar \mu \right|_{L^1}&=\left| (F_{ (\mu^{(k-1)},\nu^{(k-1)}),1})_*...(F_{ (\mu^{(0)},\nu^{(0)}),1})_*\mu^{(0)}-f_*^k\bar \mu \right|_{L^1}\nonumber\\
&\le |f_*^k(\mu^{(0)}-\bar \mu)|_{L^1}+\sum_{j=0}^{k-1} \left|(F_{ (\mu^{(k-1)},\nu^{(k-1)}),1})_*...\left[ (F_{ (\mu^{(j)},\nu^{(j)}),1})_*-f_*\right]f^{j-1}_*\bar \mu\right|_{L^1}\label{Eq:EstL1normoneway}\\
&\le C\lambda^k\|\mu^{(0)}-\bar \mu\|+k\delta_0\|\bar \mu\| \label{Eq:EstL1normtwoway}
\end{align}
where going from \eqref{Eq:EstL1normoneway} to \eqref{Eq:EstL1normtwoway} we used
\[
|f_*^k(\mu^{(0)}-\bar \mu)|_{L^1}\le \|f_*^k(\mu^{(0)}-\bar \mu)\| \le C\lambda^k\|\mu^{(0)}-\bar \mu\|
\]
and
\begin{align*}
\left|(F_{ (\mu^{(k-1)},\nu^{(k-1)}),1})_*...\left[ (F_{ (\mu^{(j)},\nu^{(j)}),1})_*-f_*\right]f^{j-1}_*\bar \mu\right|_{L^1} &\le \left|[ (F_{ (\mu^{(j)},\nu^{(j)}),1})_*-f_*]\bar \mu\right|_{L^1}\\
&\le \delta_0\|\bar \mu\|.
\end{align*}
Fix $n\in\N$ such that $2C\lambda^n K<\frac{\delta_1}{4}$, where $K$ is as in Lemma \ref{Lem:UniformBoundStrongNorm} and $\delta_1$ such that \eqref{Eq:unifConFmu1} is satisfied when $|\mu-\bar\mu|_{L^1}<\delta_1$. Then pick $\delta_0$ so that $n\delta_0\|\bar \mu\|<\frac{\delta_1}{4}$. With these choices
\[
\left| \mu^{(n)}-\bar \mu \right|_{L^1}<\frac{\delta_1}{2},
\]
and arguing by induction (redefining $(\mu,\nu)^{(0)}:=(\mu,\nu)^{((m-1)n)}$)
\[
\left| \mu^{(mn)}-\bar \mu \right|_{L^1}<\frac{\delta_1}{2}
\]
for every $m\in \N$.
Now, picking $|\mu^{(0)}-\bar \mu|_{L^1}<\frac{\delta_1}{2}$, for every $m\in \N$ and $q\in\{1...,n-1\}$, using $\eqref{Eq:EstL1normoneway}$,
\begin{align*}
\left| \mu^{(mn+q)}-\bar \mu \right|_{L^1}&\le   |f_*^q(\mu^{(mn)}-\bar \mu)|_{L^1}+q\delta_0\|\bar \mu\|\\
&\le |\mu^{(mn)}-\bar \mu|_{L^1}+n\delta_0\|\bar \mu\|\\
&<\frac{\delta_1}{2}+\frac{\delta_1}{4}\\
&<\delta_1.
\end{align*}

Assumptions A1) and A2) from Theorem \ref{Thm:PartialSynchronization} are  satisfied picking $\mc N:=\{\mu\in \mc C^1\cap \mc M^1:\,\,\|\mu\|\le K,\, |\mu-\bar\mu|_{L^1}<\delta_1/2\}$ and $\mc N':=\{\mu\in \mc C^1\cap \mc M^1:\,\,\|\mu\|\le K,\, |\mu-\bar\mu|_{L^1}<\delta_1\}$.

\end{proof}

\subsection{Chimera States}\label{Sec:Chimeras}

There is no consensus on a mathematical rigorous definition of a chimera state. Loosely speaking one can describe a chimera in the following way. Consider a system of finitely many interacting units.  If the structure of the interaction has  some symmetry, a \emph{chimera state} is a persistent state of the network  that breaks this symmetry. For example, consider a system of $n$ globally coupled units described by the variables $(\xi_1,...,\xi_n)\in \T^n$ with time evolution given by
\begin{equation}\label{Eq:CoupledSystFullSymm}
\xi_i(t+1)= f\left(\xi_i(t)+\frac 1n\sum_{j=1}^n h(\xi_i(t),\xi_j(t))\right).
\end{equation}
In this case, every unit is indistinguishable from all the other units: the system has full permutation symmetry. Then, for example, a state for this system where part of the coordinates are synchronized and  part are unsynchronized, and such that this distinction persists under the time evolution can be called a chimera state. 

 Chimeras have been studied in systems of coupled maps (\cite{abrams2004chimera}, \cite{bick2016chaotic} among many others), and observed in real world systems \cite{martens2013chimera}. 
 
 Here we show  how chimera states arise and can be described in the framework of self-consistent transfer operators. Consider a system as in \eqref{Eq:CoupledSystFullSymm}. Fix $\ell\in [0,1]$, for every $n\in\N$, divide the units into two groups of $\lfloor \ell n\rfloor$ and $n-\lfloor \ell n\rfloor$ units respectively. Assume that the units $\{\xi_i\}_{i=1}^{\lfloor n\ell\rfloor}$ are distributed according to $\mu_1$  while $\{\xi_{i}\}_{i=\lfloor n\ell \rfloor +1}^{k}$ are distributed according to the measure $\mu_2$, meaning that 
 \[
 \lim_{n\rightarrow \infty}\frac1n\sum_{j=1}^{\lfloor n\ell \rfloor} \delta_{\xi_j}=\ell\mu_1,\quad\mbox{and}\quad
 \lim_{n\rightarrow \infty}\frac1n\sum_{j=\lfloor n\ell \rfloor+1}^{n} \delta_{\xi_j}=(1-\ell)\mu_2
 \]
 where convergence is with respect to the weak topology.
  Then in the thermodynamic limit, if $h$ is continuous, for every $\xi$
 \begin{align*}
 \lim_{n\rightarrow \infty}f\left(\xi+\frac 1n\sum_{j=1}^nh(\xi,\xi_j)\right)&=\lim_{n\rightarrow \infty}f\left(\xi+ \left[\frac1n\sum_{j=1}^{\lfloor n\ell \rfloor}h(\xi,\xi_j)+\frac1n\sum_{j=\lfloor n\ell \rfloor+1}^{n}h(\xi,\xi_j)\right]\right)\\
 &= f\left(\xi+\ell \int h(\xi,y)d\mu_1(y)+(1-\ell)\int h(\xi,y)d\mu_2(y)\right).
 \end{align*}
 
The above defines the self-consistent transfer operator on two clusters associated to 
\begin{equation}\label{Eq:ChimeraSplitting}
F_{(\mu_1,\mu_2),i}(x)=f\left(x+\ell \int h(x,y)d\mu_1(y)+(1-\ell)\int h(x,y)d\mu_2(y)\right)
\end{equation}
independent of $i$. $(\mu_1,\mu_2)$ is a chimera state for this system if $\mu_1\neq\mu_2$ and $\mu_1$ and $\mu_2$ are measures fixed by $F_{(\mu_1,\mu_2),i}=F_{(\mu_1,\mu_2)}$. 

{We now give two examples. The understanding of stability in both examples goes beyond the statement of the main theorem on partially synchronized states, as the stabilty of the unsynchronized state is also discussed.}

In the first one, part of the network converges to a fixed state given by a single point, while the other part has a fixed state supported on two points. This phenomenon is also known as \emph{dynamical clustering}). 

\begin{example} \label{ex:chimera}
Let $f(x)=2x \mod 1$ and $h(x,y)=-\frac{1}{10\pi}\sin(6\pi x)\cos(6\pi y)$. Let $\mu_{1}:=\frac{1}{2}(\delta_{1/3}+\delta_{2/3})$ and $\mu_2:=\delta_0$. Notice that $h(0,y)=h(1/3,y)=h(2/3,y)=0$ and this implies that
$F_{(\mu_1,\mu_2)}(0)=f(0)=0$, $F_{(\mu_1,\mu_2)}(1/3)=f(1/3)=2/3$ and
$F_{(\mu_1,\mu_2)}(2/3)=f(2/3)=1/3.$ Then $(F_{(\mu_1,\mu_2)})_*\left(\frac{1}{2}(\delta_{1/3}+\delta_{2/3}) \right)=\frac{1}{2}(\delta_{1/3}+\delta_{2/3})$ and $(F_{(\mu_1,\mu_2)})_*\delta_0=\delta_0$, so $(\mu_1,\mu_2)$ is a chimera state. 

Compute that
\begin{align*}
&F_{(\nu_1,\nu_2)}'(x)\\
&=2\left(1+\ell \int \partial_xh(x,y)d\nu_1(y)+(1-\ell)\int \partial_xh(x,y)d\nu_2(y)\right)\\
&=2\left(1-\frac{6\ell}{10} \int \cos(6\pi x)\cos(6\pi y)d\nu_1(y)-\frac{6(1-\ell)}{10}\int \cos(6\pi x)\cos(6\pi y)d\nu_2(y)\right).
\end{align*}

Assume that $(\nu_1,\nu_2)$ is such that $\supp \nu_1 \subset B_{r}(1/3)\cup B_{r}(2/3)$ and $\supp \nu_1 \subset B_{r}(0)$. Notice that $\cos(6\pi x)\cos(6\pi y)=1$ if $x^*,y^* \in \{0,1/3,2/3\}$. So fixing $r$ sufficiently small we can achieve that $F_{(\nu_1,\nu_2)}|_{B_{r}(0)}<9/10$, $F_{(\nu_1,\nu_2)}|_{B_{r}(1/3)}<9/10$ and $F_{(\nu_1,\nu_2)}|_{B_{r}(2/3)}<9/10$.  Now we easily see that the conditions of Theorem \ref{Thm:PartialSynchronization} hold with $U=B_{r}(0)$ and $\mc N'=\mc N=\{\mu \in \mc M_1: \supp \mu \subset B_{r}(1/3)\cup B_{r}(2/3)\}$. This gives us that $d_W(\mc F^n (\nu_1,\nu_2),\mc {PS}_{U}) \to 0$.

But in fact much more is true. By using the notation $\nu_1^{(1)}:=(F_{(\nu_1,\nu_2)})_*\nu_1$ and $\nu_2^{(1)}:=(F_{(\nu_1,\nu_2)})_*\nu_2$ it will hold that $\supp \nu_1^{(1)} \subset B_{9/10 \cdot r} (1/3)\cup B_{9/10 \cdot r} (2/3)$ and $\supp \nu_2^{(1)} \subset B_{9/10 \cdot r} (0)$. Iterating this gives that $$d_W((\nu_1^{(n)},\nu^{(n)}_2),(\mu_1,\mu_2)) \to 0$$ as $n \to \infty$.
\end{example}

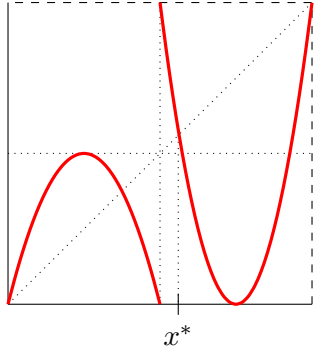
\begin{figure}
	\centering
	\begin{tikzpicture}[scale=4]
	\draw (0,0) -- (1,0);
	\draw (0,0) -- (0,1);
	\draw[dashed] (1,0) -- (1,1) -- (0,1);
	\draw[dotted] (0.5,0) -- (0.5,1);
	\draw[dotted] (0,0.5) -- (1,0.5);
	\draw[dotted] (0,0) -- (1,1);
	\draw[dotted] (0.56,0.56) -- (0.56,0);
	\draw[domain=0:0.5, smooth, variable=\x, red, very thick] plot ({\x}, {4*\x*(1-2*\x)});
	\draw[domain=0.5:1, smooth, variable=\x, red, very thick] plot ({\x}, {-8*(\x-0.5)*(2-2*\x)+1});
	\foreach \x/\xtext in {0.56/x^*}
	\draw[shift={(\x,0)}] (0pt,1pt) -- (0pt,-1pt) node[below] {$\xtext$};
	\end{tikzpicture}
\caption{The graph of $f$ in Example \ref{ex:chimera2}.}
\label{fig:chimera2}
\end{figure}

In the following example we sketch how to obtain a chimera state for a self-consistent operator with a chaotic phase, i.e. a cluster having an a.c. invariant measure, and a cluster with an attracting fixed point.

\begin{example} \label{ex:chimera2}
Divide the interval $[0,1]$ (thought of having the extrema identified, $0\sim 1$) into two intervals: $I_1=[0,1/2]$ and $I_2=[1/2,1]$. Consider $f:[0,1]\rightarrow [0,1]$ defined as
\[
f|_{I_1}(x)= 4x(1-2x),
\] 
i.e. $f$  is a rescaled version of the logistic map $4x(x-1)$ on $I_1$; $f|_{I_2}$ joins smoothly with $f|_{I_1}$, has a single repelling fixed point $x^*$ (aside from $1\sim 0$) and is defined such that full Lebesgue measure of trajectories leave $I_2$ eventually (see Figure \ref{fig:chimera2}). Notice that by construction $f(I_1)\subset I_1$, and furthermore there is a unique a.c.p. measure invariant under $f$ is $\eta$ with density
\[
\psi(x)=\left\{\begin{array}{ll}\frac{2}{\pi\sqrt{2x(1-2x)}} & x\in I_1\\
0 & x\in I_2
\end{array}
\right.
\] 
Consider $h(x,y)= \alpha v(x)u(y)$ with $u,v:[0,1]\rightarrow \R$ periodic and smooth such that $v(x)=0$ on $I_1$ and at $x^*$, while $v'(x^*)=-1$. $u$ is a positive function on $I_1$ and zero on $I_2$, and finally $\alpha$ is a real parameter. With these prescriptions equation \eqref{Eq:ChimeraSplitting} becomes 
\begin{align*}
F_{(\nu_1,\nu_2)}(x)=f\left(x+\alpha v(x)\ell \int ud\nu_1\right).
\end{align*}
First of all, notice that $( \nu_1,\nu_2)=(\eta,\delta_{x^*})$ is a fixed state under the self-consistent operator (since $v$ is zero on $I_1$ and at $x^*$, so $F_{(\nu_1,\nu_2)}$ equals $f$ on $I_1$ and at $x^*$.)   Since $u$ is positive on $I_1$, there is $K>0$ such that 
\[
K:=\int ud \nu_1>0.
\]
Now we tune $\alpha\ell$ in such a way that  $|1-\alpha\ell K|<1/|f'(x^*)|$. With this choice, for every $\nu_2$, $|F_{(\nu_1 ,\nu_2)}'(x^*)|=|f'(x^*)(1-\alpha\ell K)|<1$, and therefore $|F_{( \nu_1,\nu_2)}'(x)|\le \lambda<1$ for every $x$ in some neighborhood $U$ of $x^*$.  We can apply Theorem \ref{Thm:PartialSynchronization} with this set $U$ and $\mc N'=\mc N=\{\eta\}$ and obtain that $d_W(\mc F^n (\eta,\nu_2),\mc {PS}_{U}) \to 0$.

In fact more is true. If we take $\nu_2$ supported on $U$, then under application of the self-consistent operator the state of cluster 2 converges to $\delta_{x^*}$. Using the properties of the logistic map, one can show that starting with $\nu_1$ a suitable (small) perturbation of $\eta$ (supported on $I_1$), $\mc F^n(\nu_1,\nu_2)$ converges to $(\eta,\delta_{x^*})$.

\end{example}

\subsection{Numerical evidence of chimeras in finite networks}
Below we present some simulations showing how the chimera states, $(\mu_1,\mu_2)$, found in the examples above for the self-consistent transfer operator can be numerically detected also in the corresponding systems of finite size. We would like to stress that the simulations presented in this section have mostly illustrative purposes.  

We start from a system of $N$ coupled units evolving as in Eq. \eqref{Eq:CoupledSystFullSymm}. Assuming $N$ is even, we divide the units into two clusters of size $N/2$ each. We draw an initial condition $\bo \xi_0=(\xi_{0,1},...,\xi_{0,N})$ in the following way: for $1\le i\le N/2$ we draw $\xi_{0,i}$ at random according to a probability measure close to $\mu_1$, while for $N/2+1\le i\le N$, we draw $\xi_{0,i}$ according to a probability measure close to $\mu_2$. 

We then let the initial condition evolve   according to the set of $N$ discrete equations in Eq. \eqref{Eq:CoupledSystFullSymm} and thus get a piece of orbit $\{\bo \xi_t\}_{t=1}^T$ for some  $T>0$. For a few  values of time $t$, we plot  the histograms for the points $\{\xi_{t,i}\}_{i=1}^{N/2}$ in cluster 1, and $\{\xi_{t,i}\}_{i=N/2+1}^N$ in cluster 2 to get a visual of the distribution of these points.   Then, for every $t=1,..,T$, we compare the empirical distribution obtained from $\{\xi_{t,i}\}_{i=1}^{N/2}$ 
with that of $\mu_1$, and the empirical distribution of $\{\xi_{t,i}\}_{i=N/2+1}^N$ with that of $\mu_2$ by numerically computing
\[
\mc D_1(t)=d_W\left(\frac{1}{N/2}\sum_{i=1}^{N/2}\delta_{\xi_{t,i}},\mu_1\right)\quad\mbox {and}\quad \mc D_2(t)=d_W\left(\frac{1}{N/2}\sum_{i=N/2+1}^{N}\delta_{\xi_{t,i}},\mu_2\right).
\]
where $d_W$ denotes the Wasserstein distance\footnote{ In the current setup, we can compute the Wasserstein distance $d_W(\nu,\nu')$ as the $L^1$ norm of the pseudoinverses of the cumulative distribution functions of the probability measures $\nu$ and $\nu'$ \cite{}.}. We observe that $\mc D_1$ and $\mc D_2$ tend to remain small across the time span analyzed ($T=1500$).  Then we  study how $\mc D_1$ and $\mc D_2$ vary varying $N$. We expect these values to decrease as $N$ increases since for large $N$, the finite system should be better approximated by the self-consistent operator. To do so, we average $\mc D_i(t)$ for the values obtained when $1000\le t\le 1500$, and plot this average values as a function of $N$ with error bars denoting max and min of $\mc D_i(t)$ on the interval of $t$ considered.

The results of this analysis for Example \ref{ex:chimera} and Example \ref{ex:chimera2} are reported in Fig. \ref{Fig:Example1} and Fig. \ref{Fig:Example2}. 

We performed simulations for larger time spans that are in accord with what is observed for time spans showed in the figures below.

The simulations we present in this section have mainly illustrative purpose, and a more careful numerical analysis would be needed to draw any quantitative conclusion. 

\begin{figure}[h!]
\begin{center}
\begin{subfigure}{1\textwidth}
\begin{center}
\includegraphics[scale=0.45]{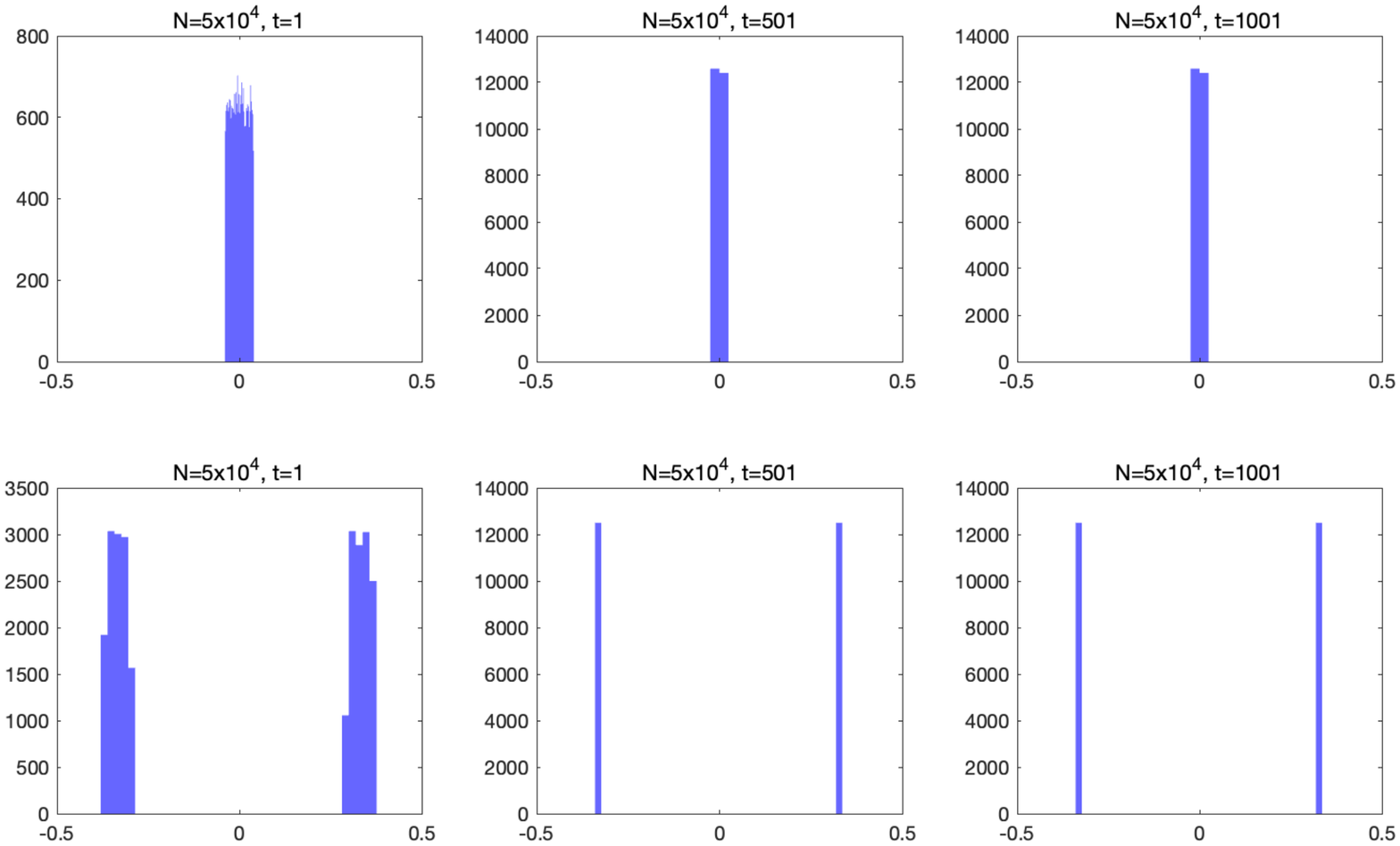}
\caption{}
\end{center}
\end{subfigure}
\begin{subfigure}{1\textwidth}
\begin{center}
\includegraphics[scale=0.45]{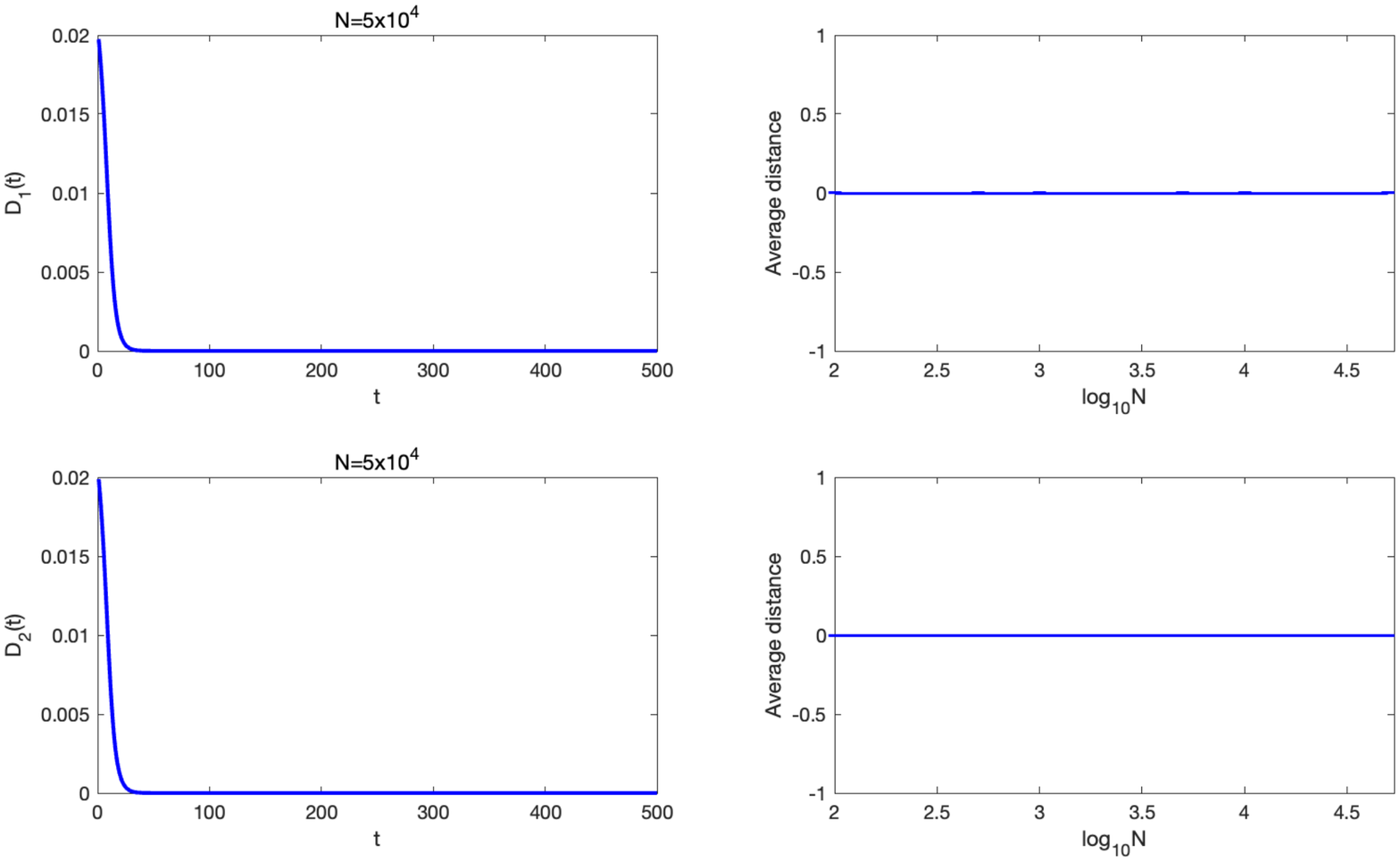}
\caption{}
\end{center}
\end{subfigure}
\end{center}
\caption{Result of the numerical analysis of the dynamic in \eqref{Eq:CoupledSystFullSymm} with $f(x)=2x$ mod 1 and $h(x,y)=-\frac{1}{10\pi}\sin(6\pi x)\cos(6\pi y)$ as in Example \ref{ex:chimera}. The first column in Panel A)  reports the histograms for initial conditions in cluster 1, first row,  and cluster 2, second row,  when $N=5*10^4$. Second and third column report the histograms at two later instants of time after 500 and 1000 time steps respectively. Notice that we represented $\T$ as the interval $[-1/2,1/2]$ with extrema identified. We observe that the points tend to pile up around 0 in cluster 1 and to evenly distribute around $-1/3$ and $1/3$ in cluster 2 (the fixed point at $2/3$ in the representation  of $\T$ as $[0,1]$, corresponds to $-1/3$ in the representation of $\T$ as $[-1/2,1/2]$). The first column of panel B)  shows $\mc D_1$ and $\mc D_2$ as a function of time when $N=5*10^4$. The last column on panel B) shows averages of $\mc D_i(t)$ varying $N=10^2,\,5*10^2,\,10^3,\,5*10^3,\,10^4,\,5*10^4$. When the average of $\mc D_i(t)$ is below $10^{-5}$ we draw a point at zero. }
\label{Fig:Example1}
\end{figure}

\begin{figure}[h!]
\begin{center}
\begin{subfigure}{1\textwidth}
\begin{center}
\includegraphics[scale=0.45]{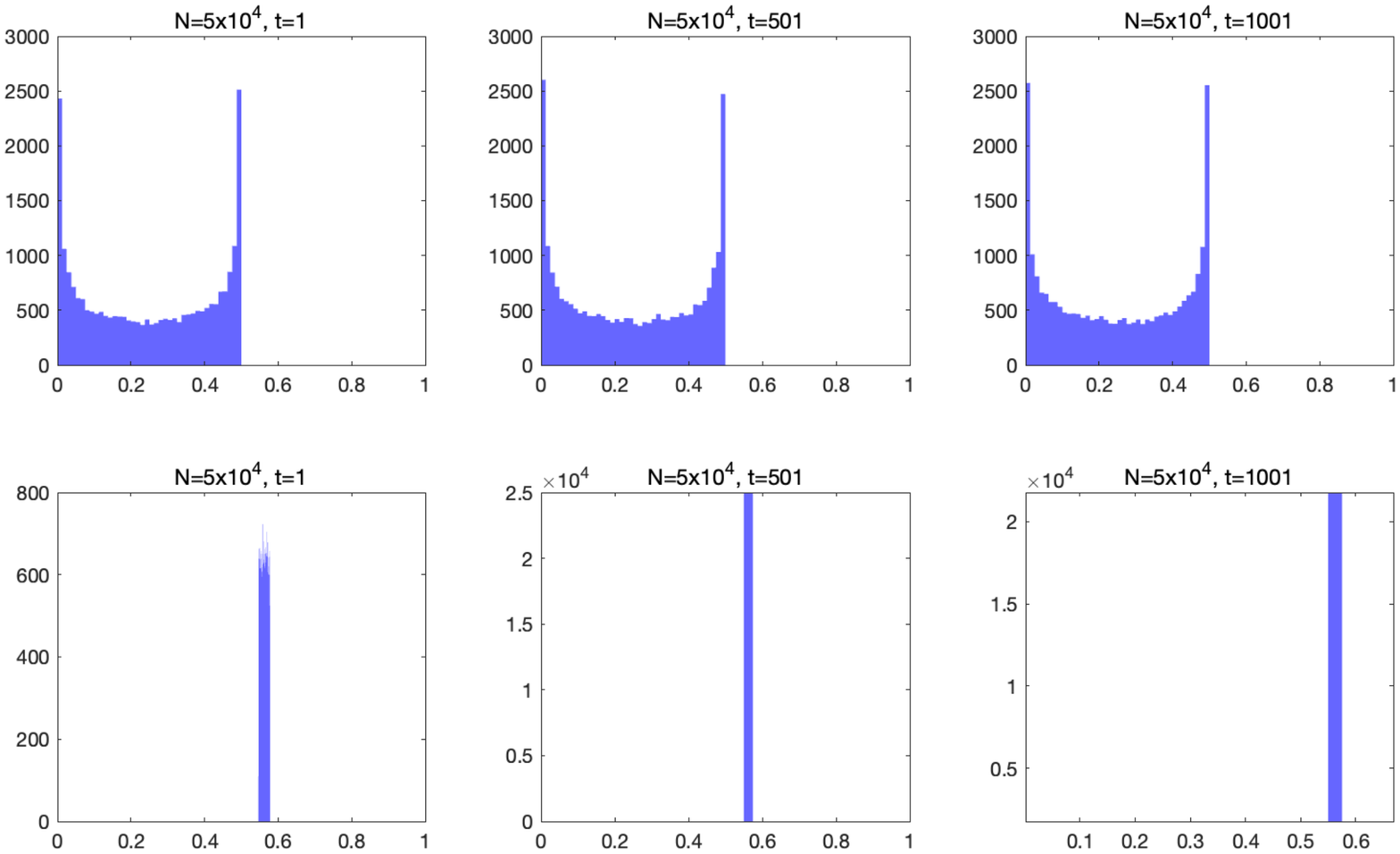}
\caption{}
\end{center}
\end{subfigure}
\begin{subfigure}{1\textwidth}
\begin{center}
\includegraphics[scale=0.45]{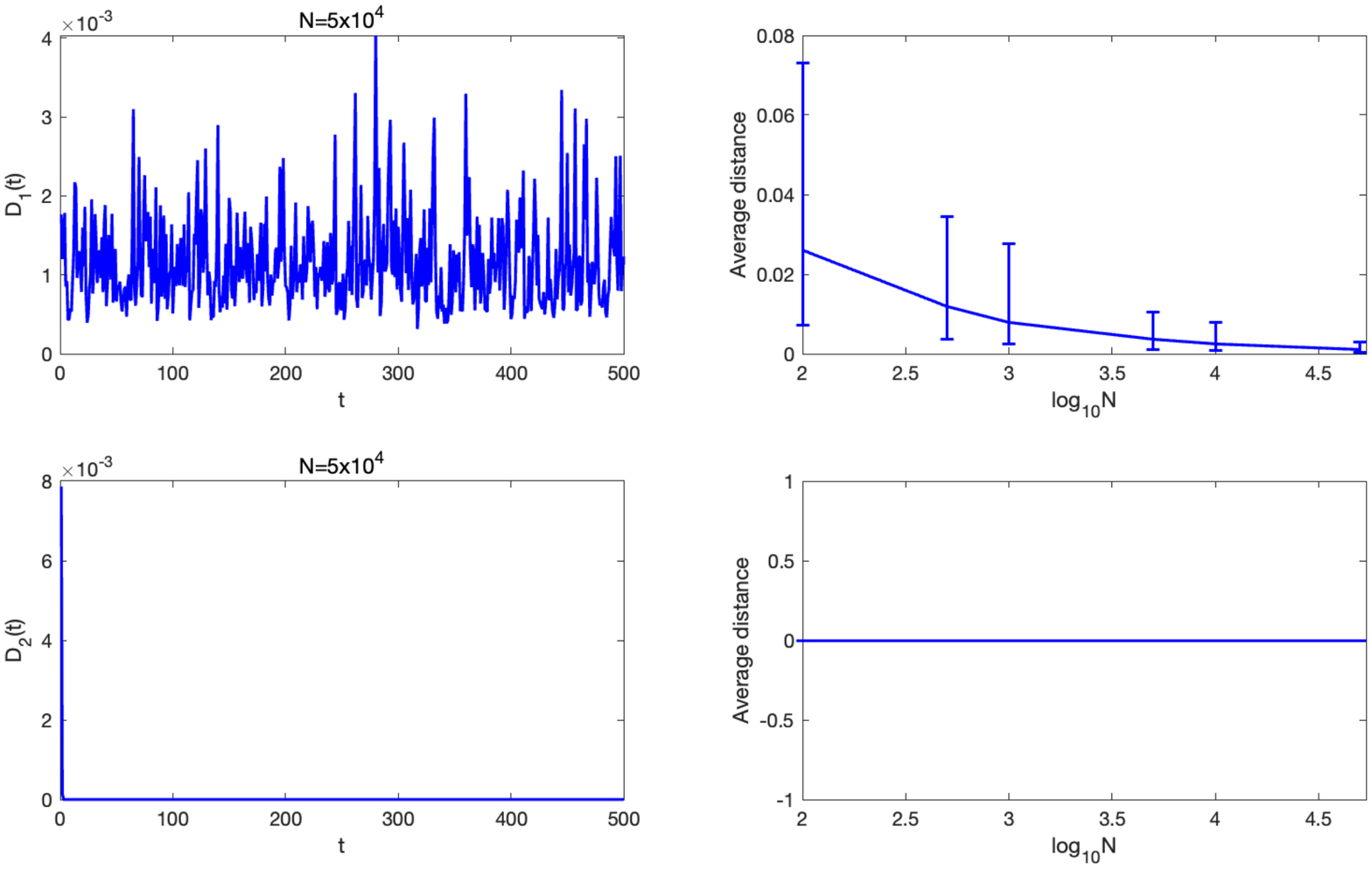}
\caption{}
\end{center}
\end{subfigure}
\end{center}
\caption{Result of the numerical analysis of the dynamic in \eqref{Eq:CoupledSystFullSymm} with $f(x)$ mod 1 and $h(x,y)$ as in Example \ref{ex:chimera2}. The first column in Panel A)  reports  the histograms for initial conditions in cluster 1, first row, and cluster 2, second row, respectively when $N=5*10^4$. Second and third column report the histograms at two later instants of time after 500 and 1000 time steps respectively. Here we represented $\T$ as the interval $[0,1]$ with extrema identified. We observe that the points tend to pile up around $x^*=0.5625$ in cluster 2 and to  distribute according to the density $\psi$ in cluster 1. The first column of panel B)  shows $\mc D_1$ and $\mc D_2$ as a function of time when $N=5*10^4$. The last column shows averages of $\mc D_i(t)$ varying $N=10^2,\,5*10^2,\,10^3,\,5*10^3,\,10^4,\,5*10^4$. When the average of $\mc D_i(t)$ is below $10^{-5}$ we draw a point at zero. The last column on panel B)  we distinctively observe that the values obtained for $\mc D_1$ decrease increasing $N$.}
\label{Fig:Example2}
\end{figure}

\FloatBarrier
\bibliographystyle{alpha}
\bibliography{Bibliography}

\end{document}